\newtheorem{theorem}{Theorem}[section]
\newtheorem{lemma}[theorem]{Lemma}
\newtheorem{prop}[theorem]{Proposition}
\theoremstyle{definition}
\theoremstyle{remark}
\newtheorem{remark}[theorem]{Remark}
\numberwithin{equation}{section}
\newcommand{\R}{\mathbb{R}}
\DeclareMathOperator\supp{supp}
\begin{document}

\title[Anisotropic and isotropic persistent
singularities]{Anisotropic and isotropic persistent
singularities of solutions of the fast diffusion equation}

\author[M. Fila]{Marek Fila}
\address{Department of Applied Mathematics and Statistics, 
Comenius University, \\ 
842 48 Bratislava, Slovakia}
\email{fila@fmph.uniba.sk}
\thanks{
The first author was partially supported by the Slovak Research and Development Agency under the contract No. APVV-18-0308 and by  VEGA grant 1/0339/21. 
The second author was partially supported by
VEGA grant 1/0339/21 and by Comenius University grant UK/111/2021. 
The third author was partially supported by 
JSPS KAKENHI Early-Career Scientists (No.~19K14567). 
The fourth author was partially supported by 
JSPS  KAKENHI Grant-in-Aid for Scientific Research (A) (No.~17H01095).
}

\author[P. Mackov\'a]{Petra Mackov\'a}
\address{Department of Applied Mathematics and Statistics, 
Comenius University, \\ 
842 48 Bratislava, Slovakia}
\email{petra.mackova@fmph.uniba.sk}

\author[J. Takahashi]{Jin Takahashi}
\address{Department of Mathematical and Computing Science, 
Tokyo Institute of Technology, \\ 
Tokyo 152-8552, Japan}
\email[Communicating author]{takahashi@c.titech.ac.jp}

\author[E. Yanagida]{Eiji Yanagida}
\address{Department of Mathematics, Tokyo Institute of Technology, \\
Tokyo 152-8551, Japan}
\email{y-aska@msc.biglobe.ne.jp}

\subjclass[2020]{Primary 35K67; Secondary 35A21, 35B40.}

\keywords{nonlinear diffusion, fast diffusion, singular solution, 
moving singularity, anisotropic singularity, Dirac source term}

\begin{abstract}
The aim of this paper is to study a class of positive solutions of the fast diffusion equation with specific persistent singular behavior. 
First, we construct new types of solutions with anisotropic singularities. Depending on parameters, either these solutions solve the original equation in the distributional sense, or they are not locally integrable in space-time.
We show that the latter also holds for solutions with snaking singularities, whose existence has been proved recently by M. Fila, J.R. King, J. Takahashi, and E. Yanagida.
Moreover, we establish that in the distributional sense, 
isotropic solutions whose existence was proved by M. Fila, J. Takahashi, and E. Yanagida in 2019, actually solve the corresponding problem with a moving Dirac source term. 
Last, we discuss the existence of solutions with anisotropic singularities in a critical case. 
\end{abstract}

\maketitle

\section{Introduction}
Let $n \geq 2$ and $m \in (0,1)$. We study positive singular solutions of the fast diffusion equation
\begin{equation} \label{eq:main}
    u_t = \Delta u^m, \qquad x \in \R^n \setminus \{ \xi_0 \}, \quad t>0,
\end{equation}
with an initial condition
\begin{equation} \label{eq:main_initial}
    u(x,0)=u_0(x), \qquad x \in \R^n \setminus \{\xi_0\}.
\end{equation}
Here, $\xi_0 \in \R^n$ is a given point at which solutions are singular, i.e.
\begin{equation*}
    u(x,t) \to \infty \quad \mbox{ as } \quad x \to \xi_0, \quad t>0.
\end{equation*}
Let $S^{n-1} := \{ x \in \R^n: |x| = 1 \}$ denote the unit $\displaystyle (n-1)$-sphere and set 
\begin{equation} \label{eq:r_omega}
    r := |x-\xi_0| \quad \text{ and } \quad \omega := (x-\xi_0)/|x-\xi_0|. 
\end{equation}
Let $\lambda>0$ and $\alpha\in C^{2,1}(S^{n-1} \times [0,\infty))$ be positive.
The aim of this paper is to study positive solutions with the 
persistent singular behavior of the form
\begin{equation} \label{eq:anisotrop_asymp}
    u(x,t) = \alpha(\omega,t) r^{-\lambda} + o(r^{-\lambda} )\quad \text{ as } \quad r \to 0,
\end{equation}
for $\omega \in S^{n-1}$ and $t\geq 0$.
We say that if $\alpha(\omega,t)$ depends non-trivially on the space variable $\omega$, the corresponding solution $u$ has an anisotropic singularity, otherwise it is asymptotically radially symmetric.

Our main result formulated in Theorem~\ref{th:anisotropic} concerns the existence of solutions of~\eqref{eq:main}-\eqref{eq:main_initial} with anisotropic singularities.
In order to prove the existence of such solutions, we introduce the following assumptions.
\begin{itemize}
\item[(A1)]
    Let $\alpha\in C^2(S^{n-1})$ be positive.
    
\item[(A2)]
    Let $0<m<1$ and let $\lambda$, $\nu$ satisfy 
    \[
    	\lambda>\frac{2}{1-m}, \qquad 
    	(1-m)\lambda-2-m(\lambda-\nu)>0, \qquad 
    	\lambda> \nu>0.
    \]
\item[(A3)]
    Let $u_0\in C(\R^n\setminus\{\xi_0\})$ be positive and such that it has the asymptotic behavior
    \begin{equation*}
        u_0^m(x)=\alpha^m(\omega) |x-\xi_0|^{-m\lambda} + O\big(|x-\xi_0|^{-m\nu}\big) \quad \text{ as } \quad x\to \xi_0,
    \end{equation*}
    for each $\omega \in S^{n-1}$, and
    \begin{equation*}
       C^{-1}\leq  u_0(x) \leq C
    \end{equation*}
    for $|x-\xi_0|\geq 1$ and some constant $C>1$.
\end{itemize}

Note that the condition (A2) implies that $\nu$ is sufficiently close to $\lambda$.

\begin{theorem} \label{th:anisotropic}
Let $n \geq 2$ and assume \emph{(A1)}, \emph{(A2)}, and \emph{(A3)}.
Then there is a function 
\[
    u\in C^{2,1}( \{(x,t)\in (\R^n\setminus\{\xi_0\})\times(0,\infty)\}) \cap C( \{(x,t)\in (\R^n\setminus\{\xi_0\})\times[0,\infty)\}),
\]
which satisfies~\eqref{eq:main}-\eqref{eq:main_initial} pointwise and
\begin{equation*}\label{eq:sing1}
    u(x,t)^m = \alpha^m(\omega) |x-\xi_0|^{-m\lambda} + O(|x-\xi_0|^{-m\nu}) \quad \text{ as } \quad x\to \xi_0,
\end{equation*}
for each $\omega \in S^{n-1}$ and $t\geq 0$. 
\end{theorem}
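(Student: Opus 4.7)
The plan is to obtain $u$ as the monotone increasing limit of classical solutions with truncated initial data, while controlling the behavior at $\xi_0$ by sub- and supersolutions that replicate the prescribed anisotropic profile. For each $k\in\N$ let $u_k$ be the unique nonnegative classical solution of \eqref{eq:main} on $\R^n\times(0,\infty)$ with initial datum $u_0^{(k)}(x):=\min\{u_0(x),k\}$, which is continuous and bounded; such solutions exist by standard theory for the fast diffusion equation, and by comparison $u_k\le u_{k+1}$, so the pointwise monotone limit $u:=\lim_k u_k$ exists.

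The core of the argument is the barrier construction. Set $\tnu:=(1-m)\lambda+m\nu$; the three inequalities of (A2) translate respectively into $m\nu+2<\tnu$, $m\lambda+2<\tnu$, and $\tnu<\lambda$. For large $A,K>0$, consider on a small punctured ball $\{0<r<r_0\}$ the candidates
\[
\overline{U}(x,t):=\alpha(\omega)r^{-\lambda}+(A+Kt)\,r^{-\tnu}, \qquad
\underline{U}(x,t):=\bigl(\alpha(\omega)r^{-\lambda}-(A+Kt)\,r^{-\tnu}\bigr)_{+}.
\]
Since $\tnu<\lambda$, the perturbation is of strictly smaller order than the singular leading term, so Taylor expansion of $s\mapsto s^m$ around $\alpha(\omega)r^{-\lambda}$, together with the identity $(1-m)\lambda-\tnu=-m\nu$, gives
\[
\overline{U}^m = \alpha^m(\omega)r^{-m\lambda} + m(A+Kt)\alpha^{m-1}(\omega)\,r^{-m\nu} + O\bigl(r^{m(\lambda-2\nu)}\bigr).
\]
Applying $\Delta[f(\omega)r^{\beta}]=r^{\beta-2}[\beta(\beta+n-2)f+\Delta_{S^{n-1}}f]$ term by term, estimating the remainder, and comparing the powers $-\tnu$, $-m\lambda-2$, $-m\nu-2$, one arrives at
\[
\overline{U}_t-\Delta\overline{U}^m = K\,r^{-\tnu}\bigl[\,1+O(r^{\tnu-m\lambda-2})+O(r^{\tnu-m\nu-2})\,\bigr],
\]
with both correction exponents positive by (A2); hence for $K$ large and $r_0$ small the bracket exceeds $1/2$, and $\overline{U}$ is a supersolution on $\{0<r<r_0\}\times[0,T]$. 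An identical computation with reversed signs shows that $\underline{U}$ is a subsolution.

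Inverting the expansion of $u_0^m$ in (A3) via the Taylor series of $s\mapsto s^{1/m}$ gives $u_0(x)=\alpha(\omega)r^{-\lambda}+O(r^{-\tnu})$, so for $A$ large enough one has $\underline{U}(\cdot,0)\le u_0\le\overline{U}(\cdot,0)$ near $\xi_0$. Extending the barriers to all of $\R^n\setminus\{\xi_0\}$ by gluing with appropriate large (resp.\ small positive) constants---using the $L^\infty$ bound on $u_0$ far from $\xi_0$ from (A3)---and comparing $u_k$ with the extended barriers on $B_{r_0}(\xi_0)\setminus\overline{B_\varepsilon(\xi_0)}$, where the inner boundary inequality $u_k\le\overline{U}$ holds for $\varepsilon$ small since $u_k\le k$ while $\overline{U}\to\infty$ as $r\to 0$, and sending $\varepsilon\to 0$, the parabolic comparison principle yields $\underline{U}\le u_k\le\overline{U}$ uniformly in $k$ on $(\R^n\setminus\{\xi_0\})\times[0,T]$. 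Passing to the monotone limit, $u$ inherits the sandwich and is therefore finite away from $\xi_0$; interior parabolic regularity for the fast diffusion equation lifts $u$ to $C^{2,1}$ on $(\R^n\setminus\{\xi_0\})\times(0,\infty)$ solving \eqref{eq:main} pointwise, continuity up to $t=0$ follows from Dini's theorem for the monotone sequence $u_k$, and the required asymptotic for $u^m$ follows from the sandwich combined with the Taylor expansion above.

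The main obstacle is the anisotropy: the leading Laplacian $\Delta(\alpha^m(\omega)r^{-m\lambda})=[m\lambda(m\lambda+2-n)\alpha^m+\Delta_{S^{n-1}}\alpha^m]\,r^{-m\lambda-2}$ has no fixed sign for generic $\alpha\in C^2(S^{n-1})$, so no time-independent perturbation of $\alpha(\omega)r^{-\lambda}$ can serve as a supersolution. The remedy is the linearly growing coefficient $(A+Kt)$ in front of $r^{-\tnu}$, which injects a positive time derivative $Kr^{-\tnu}$ that outweighs the problematic $r^{-m\lambda-2}$ contribution near the singularity precisely when $\tnu>m\lambda+2$; this is exactly the quantitative closeness of $\nu$ to $\lambda$ encoded in the third inequality of (A2), which is therefore a structural rather than a technical assumption.
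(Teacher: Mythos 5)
Your strategy---construct anisotropic super- and subsolution barriers encoding the prescribed singular profile, sandwich an approximating family, and pass to the limit---is the same strategy the paper follows, and your reading of (A2) (the third inequality being structurally needed so that the time derivative injects the dominant positive term near the singularity) is exactly the paper's point. The implementation, however, differs in a way that creates genuine gaps. The paper builds $(w^+)^m=\alpha^m r^{-m\lambda}+a(t)r^{-m\nu}+A$ directly, so that $\Delta (w^+)^m$ is a finite, \emph{exact} expression and the inequality $w^+_t-\Delta(w^+)^m\ge 0$ is verified for \emph{all} $r>0$ and \emph{all} $t>0$; the additive constant $A$ also makes $w^+$ dominate $u_0$ at spatial infinity without any gluing. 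You instead set $\overline{U}=\alpha r^{-\lambda}+(A+Kt)r^{-\tilde\nu}$ on the $u$-side and expand $\overline{U}^m$ by Taylor's theorem, which is where things go wrong.

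First, the Taylor remainder is not benign. Writing $z=(A+Kt)\alpha^{-1}r^{m(\lambda-\nu)}$, the remainder contributes to $\Delta\overline{U}^m$ a term of size $(A+Kt)^2 r^{m(\lambda-2\nu)-2}$; after dividing by $\overline{U}_t=Kr^{-\tilde\nu}$ this is $O\!\bigl(\tfrac{(A+Kt)^2}{K}\,r^{\lambda-m\nu-2}\bigr)$, whose coefficient is \emph{not} bounded uniformly in $K$ or in $t$. Your displayed identity $\overline{U}_t-\Delta\overline{U}^m=Kr^{-\tilde\nu}\bigl[1+O(r^{\tilde\nu-m\lambda-2})+O(r^{\tilde\nu-m\nu-2})\bigr]$ hides this extra term and implicitly treats the $O(\cdot)$-constants as absolute, but they depend on $A+Kt$ and on $K$; consequently the supersolution inequality only holds on a ball whose radius shrinks as $T$ (or $K$) grows, whereas the theorem requires a solution on $(0,\infty)$ in time. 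The paper's $u^m$-side ansatz with $a(t)=Ae^{At}$ avoids this entirely because there is no Taylor remainder to control.

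Second, the far-field extension is more delicate than ``gluing with appropriate large constants.'' Your $\overline{U}$ tends to $0$ as $r\to\infty$, so it does not dominate $u_0\ge C^{-1}$ there. The natural fix, $\max(\overline{U},M)$, is a max of supersolutions, which is \emph{not} a supersolution; $\min(\overline{U},M)$ is, but it equals the constant $M$ near the singularity, which is useless. The paper's $+A$ term produces a single global supersolution with no interface at all. A similar remark applies on the subsolution side: your $(\cdot)_+$ ansatz vanishes outside a shrinking ball, so it yields no positive lower bound on compacta away from $\xi_0$; the paper's two-piece subsolution has a strictly positive outer part $\alpha\,\delta^{1/m}\rho^{\mu-\lambda}r^{-\mu}$ precisely for this reason, and the matching inequality at $r=\rho(t)$ is checked explicitly. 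Finally, invoking Dini's theorem to get continuity up to $t=0$ is circular: Dini \emph{presupposes} continuity of the limit. The paper instead refers to the framework of Section 5 of~\cite{FTY}, where the passage from barriers to a classical solution continuous down to $t=0$ is carried out. Your architecture (monotone truncations $u_k$, comparison, interior regularity) is a legitimate alternative to citing~\cite{FTY}, but the continuity-at-$t=0$ step needs a real argument, for instance a barrier or modulus-of-continuity estimate near $t=0$.
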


A subclass of solutions from Theorem~\ref{th:anisotropic} has been also studied in~\cite{TY2021}. The authors of~\cite{TY2021} focused on radially symmetric solutions of~\eqref{eq:main}-\eqref{eq:main_initial} with $n\geq 3$, $0<m<m_c:=(n-2)/n$ and with the initial condition $u_0(x)=(c_1^m |x-\xi_0|^{-m\lambda} + c_2^m)^{1/m}$, where $2/(1-m)<\lambda<(n-2)/m$, $c_1>0$, and $c_2 \geq 0$. In addition to the existence, several interesting properties of these solutions have been proved, among them their uniqueness.

In our next result we show that, depending on parameters, solutions constructed in Theorem~\ref{th:anisotropic} either solve the original fast diffusion equation in the distributional sense, i.e.
\begin{equation} \label{eq:weak}
    u_t=\Delta u^m \quad \text{ in } \quad \mathcal{D}'(\R^n \times (0, \infty)),
\end{equation} 
or they are not locally integrable in space-time. 

\begin{theorem} \label{th:anisotropic_weak}
Let the conditions from Theorem~\ref{th:anisotropic} be satisfied.
\begin{enumerate}[label={\upshape(\roman*)}, align=left, widest=iii, leftmargin=*]
\item \label{it:i} 
    If $\lambda<n$, $0 < m < m_c$, and $n > 2$, then for the solution $u$ from Theorem~\ref{th:anisotropic} it holds that $u \in C([0,\infty);L^1_{loc}(\R^n))$, and it satisfies~\eqref{eq:weak} in the distributional sense, i.e.
	\[
        \int_0^\infty \int_{\R^n} \big( u \varphi_t + u^m \Delta\varphi \big) \, dy \, dt = 0
    \]
    for all $\varphi \in C_0^\infty (\R^n \times (0, \infty))$.
    
\item \label{it:ii} 
    If $\lambda \geq n$, then the solution $u$ from Theorem~\ref{th:anisotropic} satisfies $u \notin L^p_{loc}(\R^n \times [0,\infty))$ for any $p \geq 1$.
\end{enumerate}
\end{theorem}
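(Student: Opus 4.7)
\smallskip
\noindent\textbf{Proof plan.}
I would handle the two parts in parallel, both built on top of the pointwise asymptotic supplied by Theorem~\ref{th:anisotropic}. For part~\ref{it:i}, my strategy is the classical cut-off argument around the singularity. The asymptotic and positivity of $\alpha$ give an upper bound $u(x,t)\le C|x-\xi_0|^{-\lambda}$ and $u^m(x,t)\le C|x-\xi_0|^{-m\lambda}$ for $x$ near $\xi_0$, with $C$ locally uniform in $t$. Using $\lambda<n$ and the chain $m\lambda<mn<n-2$ (which follows from $m<m_c=(n-2)/n$ and $\lambda<n$), both $u(\cdot,t)$ and $u^m(\cdot,t)$ lie in $L^1_{loc}(\R^n)$ uniformly on compact time intervals. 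I then pick a cutoff $\eta_\varepsilon\in C^\infty(\R^n)$ with $\eta_\varepsilon\equiv 0$ on $B_\varepsilon(\xi_0)$, $\eta_\varepsilon\equiv 1$ outside $B_{2\varepsilon}(\xi_0)$, $|\nabla\eta_\varepsilon|\le C\varepsilon^{-1}$ and $|\Delta\eta_\varepsilon|\le C\varepsilon^{-2}$, and for $\varphi\in C_0^\infty(\R^n\times(0,\infty))$ multiply the pointwise equation $u_t=\Delta u^m$, valid on $(\R^n\setminus\{\xi_0\})\times(0,\infty)$, by $\eta_\varepsilon\varphi$. Integrating by parts once in $t$ and twice in $x$ produces no boundary contribution on $\partial B_\varepsilon$ (since $\eta_\varepsilon\varphi$ vanishes there) and yields
\[
    -\int_0^\infty\!\!\int_{\R^n} u\,\eta_\varepsilon\,\varphi_t\,dx\,dt
    =\int_0^\infty\!\!\int_{\R^n} u^m\bigl(\eta_\varepsilon\Delta\varphi+2\nabla\eta_\varepsilon\cdot\nabla\varphi+\varphi\,\Delta\eta_\varepsilon\bigr)dx\,dt.
\]

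Passing to the limit $\varepsilon\to 0$, the two terms containing $\eta_\varepsilon$ converge, by dominated convergence, to the two members of the desired weak identity. The remaining two error terms are supported in the annulus $\varepsilon\le|x-\xi_0|\le 2\varepsilon$ and, using the upper bound $u^m\le C r^{-m\lambda}$, are controlled by $C\varepsilon^{-1}\int_\varepsilon^{2\varepsilon}r^{n-1-m\lambda}dr\le C\varepsilon^{\,n-m\lambda-1}$ and $C\varepsilon^{-2}\int_\varepsilon^{2\varepsilon}r^{n-1-m\lambda}dr\le C\varepsilon^{\,n-m\lambda-2}$ respectively. Both vanish as $\varepsilon\to 0$ precisely because $m\lambda<n-2$, which is exactly what the hypotheses of part~\ref{it:i} provide. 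The continuity statement $u\in C([0,\infty);L^1_{loc}(\R^n))$ is then a direct consequence of the pointwise continuity of $u$ on $(\R^n\setminus\{\xi_0\})\times[0,\infty)$ from Theorem~\ref{th:anisotropic} combined with the integrable majorant $C|x-\xi_0|^{-\lambda}$, via dominated convergence applied at each $t$.

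For part~\ref{it:ii}, the positivity of $\alpha$ together with the asymptotic gives, for each $t\ge 0$, constants $c(t)>0$ and $r(t)>0$ with $u(x,t)\ge c(t)|x-\xi_0|^{-\lambda}$ for $0<|x-\xi_0|<r(t)$. Fixing any $t_0\ge 0$ and any $p\ge 1$, polar integration in space yields
\[
    \int_{B_{r(t_0)}(\xi_0)}u(x,t_0)^p\,dx\ge c(t_0)^p\,|S^{n-1}|\int_0^{r(t_0)}r^{n-1-p\lambda}\,dr.
\]
Since $\lambda\ge n$ and $p\ge 1$ give $p\lambda\ge n$, the radial integral diverges, and in particular $u\notin L^p_{loc}(\R^n\times[0,\infty))$ for every $p\ge 1$.

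The only somewhat delicate issue I expect is ensuring that the $O(\cdot)$-term in the asymptotic of $u^m$ is uniform in $t$ on bounded time intervals, since this is what makes the dominated convergence in part~\ref{it:i} (and the choice of $c(t)$, $r(t)$ in part~\ref{it:ii}) rigorous. This $t$-uniformity should be read off from the sub/super-solution construction that underlies Theorem~\ref{th:anisotropic}; otherwise every step is routine once the correct cutoff gradient and Laplacian scalings are matched against the sharp range $m\lambda<n-2$.
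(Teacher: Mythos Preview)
Your proposal is correct and follows essentially the same route as the paper: for part~\ref{it:i} the paper likewise invokes the cut-off argument around the singularity (it refers back to the proof of Theorem~\ref{th:dirac}, noting that here all annulus error terms vanish because $m\lambda<n-2$), and for part~\ref{it:ii} the paper bounds $u$ below by the explicit subsolution $w^-$ and reaches the same divergent radial integral criterion $p\lambda\ge n$. Your use of the bare asymptotic lower bound $u\ge c(t)r^{-\lambda}$ in part~\ref{it:ii} is a slightly cleaner shortcut than the paper's substitution $z=b(t)r^{m(\lambda-\nu)}$, but the content is identical since that lower bound is exactly what $w^-_{in}$ provides on $\{r<\rho(t)\}$.
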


We note that in the supercritical exponent range $m_c < m < 1$, the authors of \cite{HP} proved that all solutions of~\eqref{eq:weak} with $u_0 \in L^1_{loc}(\R^n)$ become locally bounded and continuous for all $t>0$. 

A further related result concerning anisotropic singularities can be found in~\cite{FKTY}. Here, the authors constructed positive entire-in-time solutions with snaking singularities for the fast diffusion equation (in the range $m^*<m<1$ and $n \geq 2$, where $m^*:=(n-3)/(n-1)$ when $n \geq 3$ and $m^*:=0$ when $n=2$). In particular, these solutions have a singularity on a set $\Gamma(t):=\{\xi(s); -\infty<s<ct\}$ for $c>0$ and each $t \in \R$. Here $\xi:\R\rightarrow \R^n$ satisfies 
Condition 1.1 in~\cite{FKTY}.
Their construction was based on the existence of the following explicit singular traveling wave solution with cylindrical symmetry
\begin{equation} \label{eq:snaking}
   U(x,t) = C \big( |a| |x-ta| + a \cdot (x-ta) \big)^{-\frac{1}{1-m}},
\end{equation}
where $a$ is a velocity vector, and $C$ is an explicitly computable constant. As in Theorem~\ref{th:anisotropic_weak}~\ref{it:ii}, the solution $U$ is also an example of a function with no local integrability in space-time. Namely, in Section~\ref{subsec:proof_snaking} we show the following.
\begin{remark} \label{rem:snaking}
    Let $n \geq 2$ and $m^*<m<1$. Then for the function $U$ from~\eqref{eq:snaking} it holds that
    $U \notin L^p_{loc}(\R^n \times \R)$ for any $p \geq 1$. 
\end{remark}

To extend the idea of various possibilities of distributional solutions of the fast diffusion and porous medium equation, we present our last result in Theorem~\ref{th:dirac}. Here, a class of asymptotically radially symmetric singular solutions satisfies the corresponding equation with a moving Dirac source term in the distributional sense.
The existence of such solutions of the initial value problem
\begin{align}
     \label{eq:porous}
     u_t &= \Delta u^m, \qquad \,\, x \in \R^n \setminus \{ \xi(t) \}, \quad t \in (0, \infty), \\
     \label{eq:porous-init}
     u(x,0)&=u_0(x), \qquad x \in \R^n \setminus \{\xi(0)\},
\end{align}
was established in Theorem~1.1 in~\cite{FTY}.
Assuming $n \geq 3$ and $m>m_*:= (n-2)/(n-1)$, the authors of~\cite{FTY} constructed singular solutions of~\eqref{eq:porous}-\eqref{eq:porous-init}, which for some given $C^1$ function $k(t)$ behave as 
\begin{equation} \label{eq:rad_as_sym}
    u^m(x,t) = k^m(t) |x-\xi(t)|^{-(n-2)} + o(|x-\xi(t)|^{-(n-2)})\quad \text{ as } \quad x \to \xi(t).
\end{equation}
It was shown in~\cite{FTY} that $m_*$ is a critical exponent for the existence of such solutions, and there are no such solutions if $m<m_*$.
To construct global-in-time solutions of this form, suitable conditions on $\xi'$, $k$, and $k'$ were imposed. 

\begin{remark} \label{rem:1}
    The existence of solutions from~\cite{FTY} can be extended to the parameter range $n\geq 3$ and $m_c < m \leq m_*$ if the singularity is not moving, i.e. if $\xi(t) \equiv \xi_0$. 
    This can be verified by an inspection of the proof of Theorem~1.1 in~\cite{FTY}, which is in this case simpler since all terms containing $\xi'$ vanish.
\end{remark}

We also remark that the results from~\cite{FTY} have been extended previously in a different way in~\cite{FMTY}. Here, the authors treated the case $n=2$, $m > m_*=0$. They established the existence of solutions that, near the singularity, behave like the fundamental solution of the Laplace equation to the power $1/m$. 

In Theorem~\ref{th:dirac} we show that solutions from~\cite{FTY} satisfy
\begin{equation} \label{eq:n3_weak_delta}
    u_t = \Delta u^m + (n-2) |S^{n-1}| k^m(t) \delta_{\xi(t)}(x) \quad \text{ in } \quad \mathcal{D}'(\R^n \times (0, \infty)).
\end{equation}
Here, $\delta_{\xi(t)}$ denotes the Dirac measure on $\R^n$, giving unit mass to the point $\xi(t) \in \R^n$. 

\begin{theorem} \label{th:dirac}
    Let $n\geq3$ and assume that conditions on $k(t)$ and $u_0(x)$ from Theorem~1.1 in~\cite{FTY} hold. Let either $m > m_*$ and $\xi(t)$ be as in Theorem~1.1 in~\cite{FTY}, or $m_c < m \leq m_*$ and $\xi(t) \equiv \xi_0$.
    Then the solution $u$ satisfies equation~\eqref{eq:n3_weak_delta} in the distributional sense, i.e.
\[
        -\int_0^\infty \int_{\R^n} \big( u \varphi_t + u^m \Delta\varphi \big) \, dx \, dt = \int_0^\infty (n-2) |S^{n-1}| k^m(t) \varphi(\xi(t),t) \, dt
\]
    for all $\varphi \in C_0^\infty (\R^n \times (0, \infty))$.
\end{theorem}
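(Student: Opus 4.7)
\emph{Proof plan.} The strategy is to trade the classical identity $u_t=\Delta u^m$, which holds pointwise only off the trajectory $\{(\xi(t),t):t>0\}$, for the distributional identity with a Dirac source by testing against a smooth cutoff that vanishes in a shrinking tubular neighborhood of the trajectory. Choose a nonnegative $\chi\in C^\infty(\R)$ with $\chi\equiv 0$ on $(-\infty,1]$ and $\chi\equiv 1$ on $[2,\infty)$, and set $\chi_\epsilon(x,t):=\chi(|x-\xi(t)|/\epsilon)$. For $\varphi\in C_0^\infty(\R^n\times(0,\infty))$, the product $\chi_\epsilon\varphi$ has compact support in the region where $u$ is classically smooth, so multiplying $u_t=\Delta u^m$ by $\chi_\epsilon\varphi$ and integrating by parts in both variables gives
\[
    -\int_0^\infty \int_{\R^n} u\,(\chi_\epsilon\varphi)_t\,dx\,dt = \int_0^\infty \int_{\R^n} u^m\,\Delta(\chi_\epsilon\varphi)\,dx\,dt.
\]
Expanding the derivatives and rearranging, this becomes
\[
    -\int_0^\infty \int_{\R^n} \bigl(u\chi_\epsilon\varphi_t + u^m\chi_\epsilon\Delta\varphi\bigr)\,dx\,dt = \int_0^\infty \int_{\R^n} \bigl(u^m\Delta\chi_\epsilon\cdot\varphi + 2u^m\nabla\chi_\epsilon\cdot\nabla\varphi + u\chi_{\epsilon,t}\varphi\bigr)\,dx\,dt.
\]

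The left-hand side converges to $-\int_0^\infty \int_{\R^n}(u\varphi_t+u^m\Delta\varphi)\,dx\,dt$ by dominated convergence, using $u^m\in L^1_{loc}$ (immediate from $u^m\sim k^m(t)|x-\xi(t)|^{-(n-2)}$) and $u\in L^1_{loc}$, the latter following from $m>m_c$ and hence $(n-2)/m<n$. The crux of the argument is the identification of the limit of the first term on the right, which extracts the singular contribution of $\Delta u^m$. For each $t$, the substitution $y=(x-\xi(t))/\epsilon$ together with the continuity of $\varphi$ and the asymptotic~\eqref{eq:rad_as_sym} reduce the inner spatial integral, up to an $o(1)$ error, to
\[
    k^m(t)\,\varphi(\xi(t),t)\int_{\R^n}|y|^{-(n-2)}\Delta\chi(y)\,dy.
\]
A direct radial integration by parts on $[0,\infty)$ yields $\int_{\R^n}|y|^{-(n-2)}\Delta\chi(y)\,dy=(n-2)|S^{n-1}|$, which produces exactly the coefficient appearing in~\eqref{eq:n3_weak_delta} once one integrates in $t$.

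It remains to verify that the two remaining correction terms vanish as $\epsilon\to0$. The gradient cross-term $\int_0^\infty \int_{\R^n} u^m\nabla\chi_\epsilon\cdot\nabla\varphi\,dx\,dt$ is $O(\epsilon)$ by direct rescaling on the annulus $B_{2\epsilon}(\xi(t))\setminus B_\epsilon(\xi(t))$: the $1/\epsilon$ scaling of $\nabla\chi_\epsilon$ is beaten by the volume $\epsilon^n$ against the size $\epsilon^{-(n-2)}$ of $u^m$ and the bounded factor $\nabla\varphi$. The time-correction $\int_0^\infty \int_{\R^n} u\chi_{\epsilon,t}\varphi\,dx\,dt$ is dominated in absolute value by $C|\xi'|_\infty\,\epsilon^{n-1-(n-2)/m}$, which tends to zero exactly when $m>m_*=(n-2)/(n-1)$, matching the first hypothesis of the theorem; in the complementary regime $m_c<m\le m_*$ the standing assumption $\xi(t)\equiv\xi_0$ forces $\chi_{\epsilon,t}\equiv 0$ so the term vanishes identically, which accounts for the dichotomy in the hypotheses. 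The main technical point I expect to address carefully is the uniformity in $t$ (on compact subintervals of $(0,\infty)$) of the $o(|x-\xi(t)|^{-(n-2)})$ asymptotic, needed to justify the outer time integration; this uniformity is inherited from the construction in~\cite{FTY}.
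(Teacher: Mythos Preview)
Your proof is correct and follows the same overall scheme as the paper: multiply the pointwise equation by a radial cutoff vanishing near the trajectory, integrate by parts, and identify the limiting boundary contribution. Two points of execution differ. For the singular term, you rescale $y=(x-\xi(t))/\epsilon$ and invoke the identity $\int_{\R^n}|y|^{-(n-2)}\Delta\chi(y)\,dy=(n-2)|S^{n-1}|$; the paper instead decomposes $\Delta\eta_\varepsilon=(n-1)r^{-1}\eta_\varepsilon'+\eta_\varepsilon''$ into three integrals $K_\varepsilon^1,K_\varepsilon^2,K_\varepsilon^3$ and squeezes each one between the explicit sub/supersolution bounds $u^m\lessgtr k^m(t)(r^{2-n}\pm b(t)r^{-\lambda})$ from~\cite{FTY}, which is more laborious but makes the error control via the second-order term $b(t)r^{-\lambda}$ fully explicit. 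Second, you isolate the term $\int u\,\chi_{\epsilon,t}\,\varphi$ coming from the motion of $\xi(t)$ and show it is $O(|\xi'|_\infty\,\epsilon^{n-1-(n-2)/m})$, which vanishes precisely when $m>m_*$ and trivially when $\xi'\equiv 0$; this gives a clean structural explanation of the dichotomy in the hypotheses. The paper's written proof does not separate this $(\eta_\varepsilon)_t$ contribution out (its $H_\varepsilon$ contains only $u\eta_\varepsilon\varphi_t$), so your treatment of that term is in fact more complete. The uniformity in $t$ you flag at the end is exactly what the paper supplies through the bounds~\eqref{eq:v_estimates} with $b(t)=b_0e^{Bt}$.
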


Equations~\eqref{eq:main} and~\eqref{eq:porous} with $n\geq3$ have radially symmetric stationary solutions of the form
\begin{equation} \label{eq:steady_state}
	\tilde u(x) = K|x-\xi_0|^{-(n-2)/m}, \qquad x \in \R^n \setminus \{ \xi_0 \},
\end{equation}
where $K$ is an arbitrary positive constant,
and these solutions satisfy
\begin{equation} \label{eq:fund_weak}
    -\Delta \tilde u(x) = (n-2)|S^{n-1}| K \delta_{\xi_0}(x) \quad \text{ in } \quad \mathcal{D}'(\R^n),
\end{equation}
where $|S^{n-1}|$ is the hypervolume of the $(n-1)$-dimensional unit sphere. Hence, the result of Theorem~\ref{th:dirac} can be expected.
In~\cite{KT14}, the authors constructed singular solutions with time-dependent singularities for the heat equation
\begin{equation*} 
    u_t = \Delta u + w(t) \delta_{\xi(t)}(x) \quad \text{ in } \quad \mathcal{D}'(\R^n \times (0, T)),
\end{equation*}
where $n \geq 2$, $T \in (0,\infty]$, and $w \in  L^1((0, t))$ for each $t \in (0,T)$. 
The behavior of solutions from~\cite{KT14} near the singularity does not always have to be like that of the fundamental solution of the Laplace equation, and the profile loses the asymptotic radial symmetry.
Further results concerning the heat equation
\begin{equation*} 
	u_t = \Delta u + \delta_{\xi(t)}(x) \otimes M(t) \quad \text{ in } 
	\quad \mathcal{D}'(\R^n \times (0, T)),
\end{equation*}
where $\delta_{\xi(t)}(y) \otimes M(t)$ is a product measure of $M(t)$ and $\delta_{\xi(t)}(x)$, can be found in~\cite{KT16}. 
Solutions of the porous medium ($m>1$) and fast diffusion equation in the supercritical range ($m_c < m < 1$) with singularities which are not necessarily standing were analyzed in \cite{Lu1} and \cite{Lu2}, respectively. 
If $M$ is a nonnegative Radon measure on $\R^{n+1}$, which satisfies $M(\Omega \times (0,T)) < \infty$ for $T>0$ and a bounded domain $\Omega \subset \R^n$, then there exists a function $u$ such that
\begin{equation*} 
	u_t = \Delta u^m + M(x, t) \quad \text{ in } \quad \mathcal{D}'(\Omega \times (0, T)),
\end{equation*}
and $u^m \in L^q((0, T); W^{1,q}_0 (\Omega) )$ with $1 < q < 1 + 1/(1 + mn)$.

A moving Dirac measure on the right-hand side of parabolic systems also appears in several biological applications concerning, for example, the growth of axons or angiogenesis. See~\cite{CZ} and \cite{Bookholt}, respectively. 
A moving Dirac measure also appears in~\cite{PO}, where the authors studied the Cattaneo telegraph equation with a moving time-harmonic source in the context of the Doppler effect.

We also mention the following two results, which can be applied to solutions with anisotropic singularities.
When $n \geq 3$, $0 < m < m_c$, and the singularity of the initial function satisfies $a_1 |x-\xi_0|^{-2/(1-m)} \leq u_0(x) \leq a_2 |x-\xi_0|^{-2/(1-m)}$ for some $a_1, a_2 > 0$ and for all $x \in \Omega$, then from \cite{vazquez2} (for $\Omega = \R^n$) and \cite{VW} (for smoothly bounded domain $\Omega \subset \R^n$) it results that finite-time blow-down occurs. More specifically, there is a $T > 0$ such that $u(\cdot, t) \notin L^\infty (\Omega)$ for $t < T$ but $u(\cdot, t) \in L^\infty (\Omega)$ for $t > T$, i.e. that the singularity disappears after a time $T$.
On the other hand, if $m$ is in the range $m_c < m < 1$, the authors of~\cite{ChV} concluded the monotonicity of strongly singular sets of extended solutions, i.e. that it cannot shrink in time. Hence, the singularity of such solutions persists for all times.

This paper is organized as follows.
A formal analysis of solutions with the asymptotic behavior~\eqref{eq:anisotrop_asymp} is given in Section~\ref{subses:fc}. 
The last part of this section is devoted to a critical case that is left as an open problem.
The existence result in Theorem~\ref{th:anisotropic} is then proved in Section~\ref{subsec:proof_thn}. 
Formal computations in Section~\ref{subses:fc} suggest the choice of comparison functions in Subsections~\ref{subsec:super} and~\ref{subsec:sub}.
We leave the question of extending the results from Theorem~\ref{th:anisotropic} from standing to moving singularities open. We see no problem in using the methods employed in this text, however, different critical exponents and technical difficulties may arise. 
We continue with the proof of Theorem~\ref{th:dirac} in Section~\ref{subsec:proof_th3}, and the proof of Theorem~\ref{th:anisotropic_weak} in Section~\ref{subsec:proof_anisotropic_weak}. Finally, computations concerning Remark~\ref{rem:snaking} are given in Section~\ref{subsec:proof_snaking}.

\section{Formal computations} 
\label{subses:fc}

Let $u(x,t)$ be given by~\eqref{eq:anisotrop_asymp}, i.e. 
\begin{equation*} 
    u(x,t) = \alpha(\omega,t) r^{-\lambda} + o(r^{-\lambda}),
\end{equation*}
where $r>0$, $\omega \in S^{n-1}$, $t\geq 0$, and recall notation~\eqref{eq:r_omega} for $r$ and $\omega$.
For $u(x,t) = w(r,\omega,t)$, the fast diffusion equation~\eqref{eq:main} is transformed into
\begin{equation} \label{eq:wprob}
    w_t = r^{1-n} \frac{\partial}{\partial r} \left(r^{n-1} \frac{\partial w^m}{\partial r} \right) + \frac{1}{r^2} \Delta_\omega w^m.
\end{equation}
Here, $\Delta_\omega$ denotes the Laplace-Beltrami operator on $S^{n-1}$.
Simple computations show that
\begin{equation} \label{eq:formal_comp}
\begin{split}
    w_t &= \alpha_t r^{-\lambda} + o(r^{-\lambda}), \\
    \Delta w^m &= \big(\Delta_\omega \alpha^m -m\lambda(n-2-m\lambda)\alpha^m \big) r^{-m\lambda -2} + o(r^{-m\lambda -2}).
\end{split}    
\end{equation}
The leading term is different in each of the three cases: $\lambda>m\lambda + 2$, $\lambda<m\lambda + 2$, and $\lambda=m\lambda + 2$.

\subsection{$\boldsymbol{\lambda> 2/(1-m)}$} 
\label{sec:results2}
The most singular case is $\lambda>m\lambda + 2$, which is equivalent to $\lambda> 2/(1-m)$. This implies that the leading term in~\eqref{eq:formal_comp} is $w_t$, hence, we set $\alpha = \alpha(\omega)$. 
The existence result in Theorem~\ref{th:anisotropic} is based on this observation.

\subsection{$\boldsymbol{\lambda< 2/(1-m)}$} 
\label{sec:results1}
The case $\lambda<m\lambda + 2$ is equivalent to $\lambda< 2/(1-m)$. The leading term in~\eqref{eq:formal_comp} is $\Delta w^m$, which implies that $\alpha$ must be a solution of
\begin{center}
    $-\Delta_\omega \alpha^m  = -m\lambda(n-2-m\lambda)\alpha^m$.
\end{center}
Eigenvalues of $-\Delta_\omega$ are non-negative and start with zero (the constant $1$ is the corresponding eigenfunction), other eigenfunctions change sign, see~\cite{LaplaceBeltrami}.
Since we are looking for positive solutions, we obtain conditions
    $$\lambda = \frac{n-2}{m}, \quad m > m_c, \quad \text{and} \quad n \geq 3 .$$
As we pointed out in the introduction, the existence of the corresponding asymptotically radially symmetric solutions for $n\geq 3$ and $m > m_*$ in the case of a moving singularity was established in Theorem~1.1 in~\cite{FTY}. 
Moreover, in Remark~\ref{rem:1} we explain that in the case of a standing singularity, the proof of Theorem~1.1 in~\cite{FTY} is valid also in the parameter range $n\geq 3$, $m_c < m \leq m_*$.
Our result extending the qualitative analysis of these solutions can be found in Theorem~\ref{th:dirac}.

\subsection{Critical case $\boldsymbol{\lambda = 2/(1-m)}$, open problem} 
\label{sec:results3}
In the critical case $\lambda = 2/(1-m)$, the terms $w_t$ and $\Delta w^m$ are balanced. 
Let 
\[
    A:=m\lambda(m\lambda-n+2) = \frac{2 m n (m-m_c)}{(1 - m)^2}. 
\]
Balancing the leading terms in~\eqref{eq:formal_comp} leads us to an initial value problem
\begin{align}
     \label{eq:alfa_eq}
     \alpha_t(\omega,t) &= \Delta_\omega \alpha^m(\omega,t) + A\alpha^m(\omega,t), \qquad \omega \in S^{n-1}, \quad 0 <t <T, \\
     \label{eq:alfa_eq_initial}
     \alpha(\omega,0) &= \alpha_0(\omega)>0, \qquad \qquad \qquad \qquad \, \omega \in S^{n-1},
\end{align}
where $T \in (0,\infty]$.
If we prove the existence of a positive classical solution $\alpha$ of~\eqref{eq:alfa_eq}-\eqref{eq:alfa_eq_initial} for some $T>0$, we obtain a positive classical solution of~\eqref{eq:main}-\eqref{eq:main_initial} of the form
\[
    u(x,t) = \alpha(\omega,t) |x-\xi_0|^{-\lambda}, \qquad x \in \R^n \setminus \{ \xi_0 \}, \quad 0 <t <T.
\]

At the end of this section, we present some examples of solutions of~\eqref{eq:alfa_eq}-\eqref{eq:alfa_eq_initial}.
A well-known explicit solution is
\[
    \tilde \alpha (t) = \big((1-m)A \, t + t_0 \big)^{\frac{1}{1-m}},
\]
where $t_0$ is an arbitrary positive constant.
In order to obtain solutions of~\eqref{eq:main}-\eqref{eq:main_initial} with an anisotropic singularity, we are interested in solutions of~\eqref{eq:alfa_eq}-\eqref{eq:alfa_eq_initial} that, unlike $\tilde \alpha$, depend non-trivially on the space variable $\omega$.
Such solutions can be obtained by looking for solutions of the form $\alpha (\omega,t) = \tau(t) \beta^{1/m}(\omega)$, where $\beta$ is non-constant. 
Using the method of separation of variables, we have $\tau(t) = ((1-m) C t + t_0)^{\frac{1}{1-m}}$, where $t_0>0$ and $C$ is a constant from the separation of variables. For $\beta(\omega)$ we obtain a semilinear elliptic equation on a sphere
\begin{equation} \label{eq:onsphere}
    \Delta_\omega \beta(\omega) + A \beta(\omega) = C \beta^{1/m}(\omega), \qquad \omega \in S^{n-1}.
\end{equation}
We briefly examine the existence of a class of solutions of~\eqref{eq:onsphere} depending only on an angle $\theta \in [0,2\pi)$. In this case, equation~\eqref{eq:onsphere} becomes $\ddot \beta(\theta) + A \beta(\theta) = C \beta^{1/m}(\theta)$.
It represents a Hamiltonian system
\[
    \begin{cases}
      \dot \beta = v, \\
      \dot v = - \beta (A - C \beta^{{1/m}-1}),
    \end{cases}
\]
with a relevant critical point $P=((A/C)^{\frac{m}{1-m}},0)$ if $C \neq 0$ has the same sign as $A \neq 0$. Notice that this condition guarantees the same asymptotic behavior of $\tau$ as that of $\tilde \alpha$, 
which is consistent with the results from~\cite{vazquez2, ChV} described in the introduction. Finally, the existence of periodic trajectories results from a standard ODE theory: the critical point $P$ is a center, i.e. all trajectories close to it are closed orbits if $A<0$. 

The existence of a more general class of classical positive solutions of~\eqref{eq:alfa_eq}-\eqref{eq:alfa_eq_initial}, which depend non-trivially on $\omega$, is left as an open problem.

\section{Proof of Theorem~\ref{th:anisotropic}} \label{subsec:proof_thn}

\subsection{Construction of supersolutions} \label{subsec:super}
We set $a(t) := A e^{At}$, where 
$A \geq 1$ is a sufficiently large constant 
chosen later, and define a function
\begin{equation} \label{eq:supersolform_n}
    w^+(r,\omega,t) := \big( \alpha^m(\omega) r^{-m\lambda} + a(t) r^{-m\nu} +A\big)^\frac{1}{m}.
\end{equation}
In what follows, we prove that $w^+$ is a supersolution of~\eqref{eq:wprob}. 

\begin{lemma} \label{l+:n}
    Let $n \geq 2$ and assume \emph{(A1)} and \emph{(A2)}. Then there exists constant $A \geq 1$, such that the function $w^+(r,\omega,t)$ defined in~\eqref{eq:supersolform_n} is a supersolution of equation~\eqref{eq:wprob} for $r>0$, $\omega \in S^{n-1}$, 
    and $t>0$.
\end{lemma}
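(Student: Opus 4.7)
The plan is to introduce $v := (w^+)^m = \alpha^m(\omega) r^{-m\lambda} + a(t) r^{-m\nu} + A$ so that $w^+ = v^{1/m}$. The chain rule recasts the supersolution inequality for~\eqref{eq:wprob} as the equivalent form $v^{(1-m)/m} v_t \geq m \Delta v$, where $\Delta$ is the Euclidean Laplacian written in $(r,\omega)$ coordinates. Termwise differentiation then yields $v_t = A a(t) r^{-m\nu}$ (because $a'(t) = A a(t)$) and
\[
    \Delta v = r^{-m\lambda - 2}\bigl[\Delta_\omega \alpha^m + m\lambda(m\lambda - n + 2)\alpha^m\bigr] + m\nu(m\nu - n + 2)\, a(t)\, r^{-m\nu - 2},
\]
with the additive constant $A$ contributing to $v^{(1-m)/m}$ but not to $\Delta v$. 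Replacing $m\Delta v$ by a crude absolute-value upper bound of the shape $M_1 r^{-m\lambda-2} + M_2\, a(t)\, r^{-m\nu-2}$, where $M_1, M_2$ depend only on $n,m,\lambda,\nu,\alpha$ and are finite by (A1), reduces the task to verifying
\[
    v^{(1-m)/m}\, A\, a(t)\, r^{-m\nu} \;\geq\; M_1 r^{-m\lambda - 2} + M_2\, a(t)\, r^{-m\nu - 2}.
\]

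Since $(1-m)/m > 0$ and the three summands of $v$ are positive, two pointwise lower bounds come for free: $v^{(1-m)/m} \geq \alpha^{1-m} r^{-\lambda(1-m)}$ and $v^{(1-m)/m} \geq A^{(1-m)/m}$. I would then split $r \in (0,\infty)$ into $\{r \leq 1\}$ and $\{r \geq 1\}$, and within each region split the left-hand side in half to absorb the two right-hand terms separately. On $r \leq 1$ I would use the first lower bound; each sub-inequality reduces to one of the form $r^{\gamma_j} \leq C\, A\, a(t)/M_j$, with $\gamma_1 := (1-m)\lambda - 2 - m(\lambda-\nu) > 0$ by the second clause of~(A2) and $\gamma_2 := \lambda(1-m) - 2 > 0$ by the first clause of~(A2); since $r^{\gamma_j} \leq 1$ there and $A a(t) \geq A^2$, a single large choice of $A$ handles both. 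On $r \geq 1$ I would use the second lower bound; the surviving $r$-factors in the two sub-inequalities are $r^{m\lambda + 2 - m\nu}$ (positive exponent because $\lambda > \nu$) and $r^2$, both $\geq 1$, so these collapse to pure powers of $A$ and hold once $A$ is taken large.

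The main obstacle is the small-$r$ bookkeeping: the source $v_t \sim A a(t) r^{-m\nu}$ is strictly less singular than the leading diffusion term $\sim r^{-m\lambda-2}$, and what rescues the inequality is the chain-rule gain $v^{(1-m)/m} \sim r^{-\lambda(1-m)}$ coming from the anisotropic term of $v$. Assumption~(A2) is calibrated precisely so that $\lambda(1-m)$ exceeds both $2$ and $m(\lambda-\nu)+2$, closing the two deficits simultaneously; the large-$r$ analysis is then routine thanks to the uniform floor $v \geq A$.
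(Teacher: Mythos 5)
Your proposal is correct and is essentially the paper's own argument: the substitution $v=(w^+)^m$ and the inequality $v^{(1-m)/m}v_t\geq m\Delta v$ is just a notational repackaging of the paper's direct computation of $w^+_t-\Delta(w^+)^m$, and both proofs split at $r=1$, lower-bound $v^{(1-m)/m}$ by $\alpha^{1-m}r^{-\lambda(1-m)}$ on $\{r\leq 1\}$ and by $A^{(1-m)/m}$ on $\{r\geq 1\}$, and invoke exactly the two exponent inequalities in (A2) together with $a(t)=Ae^{At}\geq A$ to absorb the diffusion terms for $A$ large. The only cosmetic difference is that the paper puts all terms over a common power $r^{-m\lambda-2}$ (resp.\ $r^{-m\nu-2}$) before taking $A$ large, whereas you halve the source term and treat the two diffusion contributions separately; these are interchangeable.
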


\begin{proof}
We define a bounded function
\[
    \sigma(\omega) := \Delta_\omega \alpha^m(\omega) + m\lambda (m\lambda-n+2) \alpha^m(\omega)
\]
and compute
\begin{equation*}
\begin{split}
    w^+_t &= \frac{1}{m} A a r^{-m\nu} \big(\alpha^m r^{-m\lambda} + a r^{-m\nu} +A\big)^{\frac{1}{m}-1}, \\
    -\Delta (w^+)^m &= -\sigma r^{-m\lambda-2} - m\nu (m\nu-n+2) a r^{-m\nu-2}.
\end{split}    
\end{equation*}
Since 
$(1-m)\lambda-2-m(\lambda-\nu)>0$,
$\alpha \geq \alpha_{min} :=\min_{\omega \in S^{n-1}} \alpha(\omega) >0$, $\sigma \leq \sigma_{max}:= \max_{\omega\in S^{n-1}} \sigma(\omega) < \infty$, 
and $A\geq1$, for $r\leq 1$ we obtain
\begin{equation*}
\begin{split}
     &w^+_t -\Delta (w^+)^m \\
     &= \frac{1}{m} A a \big(\alpha^m + a r^{m(\lambda-\nu)} + A r^{m\lambda} \big)^{\frac{1}{m}-1} r^{-\lambda + m(\lambda-\nu)} 
     - \sigma r^{-m\lambda-2} - m\nu (m\nu-n+2) a r^{-m\nu-2} \\
     &\geq \frac{1}{m} A a \alpha^{1-m}
     r^{-m\lambda-2} - \sigma r^{-m\lambda-2} - m\nu (m\nu+2) a r^{-m\nu-2} \\ 
     &\geq \left( \Big( \frac{1}{m} A \alpha_{min}^{1-m} - m\nu (m\nu+2) \Big) a - \sigma_{max} \right) r^{-m\lambda-2}.
\end{split}
\end{equation*}
Thus, for $A \geq m \alpha_{min}^{-(1-m)} (\sigma_{max} + m\nu (m\nu+2))$, it holds that $w^+_t -\Delta (w^+)^m \geq 0$ for all $r\leq 1$, $\omega \in S^{n-1}$, 
and $t>0$.
Similarly, for $r>1$, $\omega \in S^{n-1}$, and $t > 0$ we have 
\begin{equation*}
\begin{split}
     w^+_t -\Delta (w^+)^m 
     &\geq \frac{1}{m}  A^\frac{1}{m} a r^{-m\nu} - \sigma_{max} r^{-m\lambda-2} - m\nu (m\nu-n+2) a r^{-m\nu-2} \\
     &\geq \left( \Big(\frac{1}{m} A - m\nu (m\nu-n+2) \Big) a - \sigma_{max} \right) r^{-m\nu-2}.
\end{split}
\end{equation*}
This completes the proof that for any $A \geq 1$ sufficiently large, the function $w^+$ defined in~\eqref{eq:supersolform_n} is a supersolution of~\eqref{eq:wprob} for $t>0$ in the whole space.
\end{proof}

\subsection{Construction of subsolutions} \label{subsec:sub}
Let $\mu>\lambda$ satisfy
\begin{equation}\label{eq:asmu}
	m\mu (m\mu+2-n) \min_{\omega \in S^{n-1}} \alpha^m - \max_{\omega\in S^{n-1}}|\Delta_\omega(\alpha^m)| >0.
\end{equation}
Note that~\eqref{eq:asmu} implies $\mu>(n-2)/m$.
Let $\delta>0$ satisfy
\[
	0<\delta<\frac{\lambda-\nu}{\mu-\nu},
\] 
and define
\[
	b(t):=b_0 e^{Bt}, \qquad 
	\rho(t):= (1-\delta)^\frac{1}{m(\lambda-\nu)} 
	b^{-\frac{1}{m(\lambda-\nu)}}(t), 
\]
where $b_0, B>1$ are sufficiently large constants chosen later. 
We set 
\[
\begin{aligned}
	w_{in}^-(r,\omega,t) &:= \alpha(\omega) r^{-\lambda} 
	( 1-b(t) r^{m(\lambda-\nu)} )^\frac{1}{m}, \\
	w_{out}^-(r,\omega,t) &:= \alpha(\omega) \delta^\frac{1}{m} 
	\rho^{\mu-\lambda}(t) r^{-\mu}.
\end{aligned}
\]
Note that the zero point of $w_{in}^-$ is $b^{-\frac{1}{m(\lambda-\nu)}}(t)$
and that $w_{in}^-$ intersects $w_{out}^-$ at $r=\rho(t)<1$. 
Now we can construct a subsolution of the form 
\begin{equation} \label{eq:subsolform}
	w^-(r,\omega,t)= 
	\left\{ 
\begin{aligned} 
	& w_{in}^-(r,\omega,t) 
	&&\mbox{ for }r\leq \rho(t), \text{ } t\geq 0, \\
	& w_{out}^-(r,\omega,t)
	&&\mbox{ for }r> \rho(t), \text{ } t\geq 0. 
\end{aligned}
	\right.
\end{equation}

\begin{lemma} \label{l-:n}
Let $n \geq 2$ and assume \emph{(A1)} and \emph{(A2)}. Then there exist constants $b_0, B>1$, such that the function $w^-$ defined in~\eqref{eq:subsolform} is a subsolution of equation~\eqref{eq:wprob} for $r>0$, $\omega \in S^{n-1}$, and $t >0$.
\end{lemma}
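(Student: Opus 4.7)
The plan is to verify the subsolution inequality $w_t-\Delta w^m\leq 0$ in the classical pointwise sense on each of the two regions $r\leq\rho(t)$ and $r>\rho(t)$ separately, and then to argue that the radial-derivative jump across the interface $r=\rho(t)$ is in the subsolution-friendly direction so that the piecewise function $w^-$ is a generalized (distributional) subsolution of~\eqref{eq:wprob}.

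For the inner piece, the algebraic key is the identity
\[
    (w_{in}^-)^m = \alpha^m(\omega)r^{-m\lambda}-b(t)\alpha^m(\omega)r^{-m\nu},
\]
which makes $\Delta(w_{in}^-)^m$ a sum of two explicit terms built from the bounded functions $m\lambda(m\lambda+2-n)\alpha^m+\Delta_\omega\alpha^m$ and $m\nu(m\nu+2-n)\alpha^m+\Delta_\omega\alpha^m$, mirroring the computation in Lemma~\ref{l+:n}. Direct chain-rule differentiation gives
\[
    \partial_t w_{in}^- = -\frac{\alpha(\omega)Bb(t)}{m}\bigl(1-b(t)r^{m(\lambda-\nu)}\bigr)^{\frac{1}{m}-1}r^{-\lambda+m(\lambda-\nu)},
\]
which is non-positive, and the middle factor is bounded below by $\delta^{\frac{1}{m}-1}$ throughout $r\leq\rho(t)$ since $b(t)r^{m(\lambda-\nu)}\leq 1-\delta$ there. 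Multiplying the desired inequality by $r^{m\lambda+2}$ reduces it to a pointwise estimate of the form
\[
    \frac{\alpha_{min}}{m}\delta^{\frac{1}{m}-1}Bb(t)r^{-\kappa}\geq M_1+M_2\,b(t)r^{m(\lambda-\nu)},
\]
where $\kappa:=(1-m)\lambda-2-m(\lambda-\nu)>0$ by~\emph{(A2)} and $M_1,M_2$ are constants depending only on $\alpha$, $m$, $n$, $\lambda$, $\nu$. Since $b(t)r^{m(\lambda-\nu)}\leq 1-\delta$ and, on the inner region, $r^{-\kappa}\geq\rho(t)^{-\kappa}\sim b(t)^{\kappa/[m(\lambda-\nu)]}$, the left-hand side is at least of order $Bb_0^{1+\kappa/[m(\lambda-\nu)]}$, which is as large as we please once $b_0$ and $B$ are chosen large enough.

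For the outer piece the computation is simpler: $(w_{out}^-)^m=\alpha^m(\omega)\delta\rho(t)^{m(\mu-\lambda)}r^{-m\mu}$, so $\Delta(w_{out}^-)^m$ is proportional to $m\mu(m\mu+2-n)\alpha^m+\Delta_\omega\alpha^m$, which is strictly positive by~\eqref{eq:asmu}, whereas $\partial_t w_{out}^-$ is non-positive because $\rho(t)$ is decreasing in $t$ and $\mu>\lambda$; hence $\partial_t w_{out}^-\leq 0\leq\Delta(w_{out}^-)^m$ holds for free, independently of the size of $b_0,B$. At the interface, continuity of $w^-$ follows from the identity $1-b\rho^{m(\lambda-\nu)}=\delta$, while a direct calculation yields
\[
    \partial_r w_{in}^-(\rho-)-\partial_r w_{out}^-(\rho+)=\alpha(\omega)\rho^{-\lambda-1}\delta^{\frac{1}{m}-1}\bigl[\delta(\mu-\nu)-(\lambda-\nu)\bigr]<0
\]
thanks to the choice $\delta<(\lambda-\nu)/(\mu-\nu)$. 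Thus $w^-$ has a concave-upward corner at $r=\rho(t)$, so the distributional Laplacian of $(w^-)^m$ acquires a non-negative singular measure concentrated on $\{r=\rho(t)\}$, which only aids the subsolution inequality.

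The main obstacle I anticipate is the inner-region exponent bookkeeping: after the normalization by $r^{m\lambda+2}$, one must verify that every remaining $r$-dependence on the right-hand side is either a bounded constant or absorbed into the single factor $r^{-\kappa}$ with $\kappa>0$, and this is precisely what condition~\emph{(A2)} is engineered to guarantee. The interface step additionally requires that the notion of subsolution used in the subsequent comparison principle accommodates distributional contributions on the moving hypersurface $\{r=\rho(t)\}$, but once the favorable sign of the gradient jump is established the matching is standard.
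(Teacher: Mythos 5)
Your proof is correct and follows essentially the same route as the paper: a two-piece estimate (inner piece handled by the time derivative dominating after multiplication by $r^{m\lambda+2}$, with condition (A2) ensuring the exponent $\kappa>0$; outer piece sign-definite by construction of $\mu$), plus the favorable derivative jump at the interface. The only cosmetic difference is that the paper verifies the matching condition in terms of $\partial_r(w^-)^m$ rather than $\partial_r w^-$, and it bounds $r^{-\kappa}\geq 1$ on the inner region instead of the sharper $r^{-\kappa}\geq\rho^{-\kappa}$; both give the same conclusion.
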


\begin{proof}
\textbf{Inner part:}
Let $t>0$.
We consider the inner part $r\leq \rho(t)$. 
Straightforward computations show that 
\[
\begin{aligned}
	(w_{in}^-)_t &=
	\frac{1}{m} \alpha r^{-\lambda} 
	( 1-b r^{m(\lambda-\nu)} )^{\frac{1}{m}-1} 
	(-b' r^{m(\lambda-\nu)} ) \\
	&= 
	- \frac{1}{m} B b 
	\alpha r^{-\lambda+m(\lambda-\nu)} 
	( 1-b r^{m(\lambda-\nu)} )^{\frac{1}{m}-1}
\end{aligned}
\]
and 
\[
\begin{aligned}
	\Delta (w_{in}^-)^m =& \, m \alpha^m ( (m\lambda+2-n) \lambda r^{-m\lambda-2} 
	- (m\nu+2-n) \nu b r^{-m\nu-2} )\\
	&+ ( r^{-m\lambda-2} -b r^{-m\nu-2} ) \Delta_\omega(\alpha^m).
\end{aligned}
\]
By the definition of $\rho$, we have 
\[
\begin{aligned}
	(w_{in}^-)_t - \Delta (w_{in}^-)^m &=
	- \frac{1}{m} B b 
	\alpha r^{-\lambda+m(\lambda-\nu)} 
	( 1-b r^{m(\lambda-\nu)} )^{\frac{1}{m}-1}
	- ( r^{-m\lambda-2} -b r^{-m\nu-2} ) \Delta_\omega(\alpha^m)  \\
	&\quad 
	- m \alpha^m ( (m\lambda+2-n) \lambda r^{-m\lambda-2} 
	- (m\nu+2-n) \nu b r^{-m\nu-2} )  \\
	&\leq 
	- \frac{1}{m} B b 
	\alpha r^{-\lambda+m(\lambda-\nu)} ( 1-b \rho^{m(\lambda-\nu)} )^{\frac{1}{m}-1}
	+ C r^{-m\lambda-2} + C b r^{-m\nu-2}  \\
	&=
	- \frac{1}{m} B b 
	\alpha r^{-\lambda+m(\lambda-\nu)} \delta^{\frac{1}{m}-1}
	+ C r^{-m\lambda-2} + C b r^{-m\nu-2} 
\end{aligned}
\]
for $r\leq \rho$, where $C>0$ is a constant independent of $b$. 
By $\alpha_{min}>0$, $\lambda>\nu$,
$b>1$, and $r\leq \rho<1$, we have 
\[
\begin{aligned}
	(w_{in}^-)_t - \Delta (w_{in}^-)^m &\leq 
	- \frac{1}{m} B b \delta^{\frac{1}{m}-1}
	\alpha_{min} r^{-\lambda+m(\lambda-\nu)} + C b r^{-m\lambda-2}  \\
	&=
	- b r^{ -\lambda+m(\lambda-\nu) } 
	\left(  \frac{1}{m} \delta^{\frac{1}{m}-1} \alpha_{min} B 
	- C r^{(1-m)\lambda-2-m(\lambda-\nu)} \right). 
\end{aligned}
\]
Recall that $(1-m)\lambda-2-m(\lambda-\nu)>0$. 
Thus, 
\[
\begin{aligned}
	(w_{in}^-)_t - \Delta (w_{in}^-)^m
	\leq 
	- b r^{ -\lambda+m(\lambda-\nu) } 
	\left(  \frac{1}{m} \delta^{\frac{1}{m}-1} \alpha_{min} B - C\right)
\end{aligned}
\]
for $r\leq \rho$. 
Hence, by choosing $B>1$ large, we conclude that 
$w_{in}^-$ is a subsolution for $r\leq \rho(t)$. 

\textbf{Matching condition:}
Since both $w_{in}^-$ and $w_{out}^-$ are of the separated form $\alpha(\omega) f(r,t)$, it is sufficient to check that 
\[
	\left. \frac{\partial}{\partial r} (w_{in}^-)^m  \right|_{r=\rho(t)}
	<
	\left. \frac{\partial}{\partial r} (w_{out}^-)^m  \right|_{r=\rho(t)}. 
\]
By the definition of $\rho$ and the choice of $\delta$, we have 
\[
\begin{aligned}
	\left. \frac{\partial}{\partial r} (w_{out}^-)^m  \right|_{r=\rho(t)}
	- 
	\left. \frac{\partial}{\partial r} (w_{in}^-)^m  \right|_{r=\rho(t)}
	&= 
	-\alpha^m m\mu \delta \rho^{-m\lambda-1}
	- \alpha^m (-m\lambda \rho^{-m\lambda-1} + m\nu b\rho^{-m\nu-1}) \\
	&= 
	\alpha^m m \rho^{-m\lambda-1}
	\left( 
	- \mu \delta + \lambda - \nu b\rho^{m(\lambda-\nu)} 
	\right) \\
	&=
	\alpha^m m \rho^{-m\lambda-1}
	\big( - \mu \delta + \lambda - (1-\delta) \nu \big) \\
	&= 
	\alpha^m m \rho^{-m\lambda-1}
	\big( \lambda-\nu-(\mu-\nu)\delta \big) > 0. 
\end{aligned}
\]

\textbf{Outer part:}
Note that 
\[
	\rho'(t)=
	-\frac{1}{m(\lambda-\nu)} B\rho(t) \leq0. 
\]
From this, it follows that 
\[
	(w_{out}^-)_t =
	\alpha \delta^\frac{1}{m} (\mu-\lambda) \rho^{\mu-\lambda-1} \rho' r^{-\mu} 
	\leq 0. 
\]
By direct computations, we have 
\[
\begin{aligned}
	\Delta (w_{out}^-)^m &= 
	\alpha^m \delta \rho^{m(\mu-\lambda)} 
	m\mu (m\mu+2-n) r^{-m\mu-2} 
	+ \delta \rho^{m(\mu-\lambda)} r^{-m\mu-2} \Delta_\omega(\alpha^m).
\end{aligned}
\]
Then~\eqref{eq:asmu} implies  
\[
\begin{aligned}
	(w_{out}^-)_t - \Delta (w_{out}^-)^m &\leq 
	- \alpha^m \delta \rho^{m(\mu-\lambda)} 
	m\mu (m\mu+2-n) r^{-m\mu-2} 
	- \delta \rho^{m(\mu-\lambda)} r^{-m\mu-2} \Delta_\omega(\alpha^m)  \\
	&= 
	-\delta \rho^{m(\mu-\lambda)} r^{-m\mu-2}
	\left( \alpha^m m\mu (m\mu+2-n) 
	+\Delta_\omega(\alpha^m)  \right) \\
	&\leq 
	-\delta \rho^{m(\mu-\lambda)} r^{-m\mu-2}
	\left( \alpha_{min}^m m\mu (m\mu+2-n) 
	- \max_{\omega\in S^{n-1}}|\Delta_\omega(\alpha^m)|  \right) 
	\leq 0. 
\end{aligned}
\]
Hence $w_{out}^-$ is a subsolution for $r\geq \rho(t)$. 
\end{proof}

\subsection{Completion of the proof of Theorem~\ref{th:anisotropic}} 
\label{subsec:compl}

\begin{prop} \label{pro:n}
    Let $n \geq 2$ and assume \emph{(A1)}, \emph{(A2)}, and \emph{(A3)}.
    Then there exist a supersolution $w^+$ and a subsolution $w^-$ of~\eqref{eq:wprob}, which have for each $\omega \in S^{n-1}$ and $t\geq 0$ the asymptotic behavior
    \begin{equation*}
        w^+(r,\omega,t)^m = \alpha^m(\omega) r^{-m\lambda} + O(r^{-m\nu}), \quad
        w^-(r,\omega,t)^m = \alpha^m(\omega) r^{-m\lambda} + O(r^{-m\nu})
    \end{equation*}
    as $r\to 0$. Moreover, 
    \begin{equation*}
        w^-(r,\omega,t) \leq w^+(r,\omega,t) 
    \end{equation*}
    and
    \begin{equation*}
        w^-(r,\omega,0) \leq u_0(x) \leq w^+(r,\omega,0)
    \end{equation*}
    for all $r>0$, $\omega \in S^{n-1}$, which are defined in~\eqref{eq:r_omega}, and $t\geq 0$. 
\end{prop}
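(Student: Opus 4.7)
The proof essentially collects the building blocks already assembled in Lemmas~\ref{l+:n} and~\ref{l-:n}, so my plan is to (i) fix the free constants $A$, $B$, $b_0$, $\delta$ once and for all in a way compatible with both lemmas and with the initial condition, then (ii) check the three remaining claims (the asymptotic expansions, the ordering $w^-\le w^+$, and the initial comparison with $u_0$) by direct pointwise computation.

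For the asymptotic expansion, I would start directly from the explicit formulas in~\eqref{eq:supersolform_n} and~\eqref{eq:subsolform}. Raising to the $m$-th power gives
\[
    (w^+)^m = \alpha^m(\omega)r^{-m\lambda} + a(t)r^{-m\nu} + A, \qquad
    (w_{in}^-)^m = \alpha^m(\omega)r^{-m\lambda} - \alpha^m(\omega)b(t)r^{-m\nu},
\]
and since $a(t),b(t)$ are locally bounded in $t$ while $\lambda>\nu$, both expressions match $\alpha^m(\omega)r^{-m\lambda}+O(r^{-m\nu})$ as $r\to 0$; for $w^-$ this uses only $w_{in}^-$, which is the relevant branch for $r$ small since $\rho(t)>0$.

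For the ordering $w^-\le w^+$, I would argue separately in the two parts of the definition of $w^-$. In the inner region $r\le \rho(t)$, one reads off $(w_{in}^-)^m \le \alpha^m r^{-m\lambda} \le (w^+)^m$ directly. In the outer region $r\ge\rho(t)$, the key step is the bound
\[
    (w_{out}^-)^m = \alpha^m\delta\,\rho^{m(\mu-\lambda)} r^{-m\mu}
    \le \alpha^m\delta\, r^{m(\mu-\lambda)}r^{-m\mu}
    = \alpha^m\delta\, r^{-m\lambda} \le (w^+)^m,
\]
where the first inequality uses $\rho(t)\le r$ and $\mu>\lambda$, and the final inequality uses $\delta<1$ together with $(w^+)^m\ge \alpha^m r^{-m\lambda}$.

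The actual work is the initial comparison. By (A3) there exist constants $M>0$ and $r_0\in(0,1]$ with $|u_0^m(x)-\alpha^m(\omega)r^{-m\lambda}|\le M r^{-m\nu}$ for $r\le r_0$; by continuity $u_0$ is bounded above on $\{r_0\le r\le 1\}$, and by (A3) it is also bounded and bounded away from $0$ for $r\ge 1$. For $u_0\le w^+(\cdot,0)$ one notes that $(w^+(r,\omega,0))^m \ge \alpha^m r^{-m\lambda}+A r^{-m\nu}+A$, so enlarging $A$ (which only strengthens Lemma~\ref{l+:n}) dominates the $O(r^{-m\nu})$ correction for $r\le r_0$ and dominates the uniform bound $C$ of (A3) elsewhere. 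For $w^-(\cdot,0)\le u_0$, one chooses $b_0$ so large that $\rho(0)\le r_0$; then in the inner region the inequality reduces to $\alpha^m(\omega)b_0\ge M$, achieved by enlarging $b_0$. In the outer region I would split the range $r\in[\rho(0),\infty)$ into three pieces: on $[\rho(0),r_0']$ (with $r_0'\le r_0$ chosen so small that $M r_0'^{\,m(\lambda-\nu)}\le \alpha_{min}^m(1-\delta)$) the bound $(w_{out}^-)^m\le \alpha^m\delta r^{-m\lambda}$ beats $\alpha^m r^{-m\lambda}-Mr^{-m\nu}\le u_0^m$; on $[r_0',1]$ and on $[1,\infty)$, the function $w_{out}^-$ is bounded above by $\alpha_{max}\delta^{1/m}\rho(0)^{\mu-\lambda}r^{-\mu}$, which can be made uniformly smaller than the positive continuous lower bound of $u_0$ by further enlarging $b_0$ (recall $\mu>\lambda$, so $\rho(0)^{\mu-\lambda}\to 0$).

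The main obstacle is the last piece: the three-region comparison in the outer subsolution, where one must verify that the same $b_0$ (and $\delta$, which is already constrained by $\delta<(\lambda-\nu)/(\mu-\nu)$) works simultaneously against the singular asymptotics near $\xi_0$, against continuity on intermediate compact sets, and against the far-field bound from (A3). All other ingredients are essentially algebraic and follow once the constants are fixed in the order $\delta$, then $b_0$ and $B$, then $A$.
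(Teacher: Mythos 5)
Your proposal is correct and follows exactly the paper's route: take $w^{\pm}$ from Lemmata~\ref{l+:n} and~\ref{l-:n}, read off the expansions from the explicit formulas, and squeeze $u_0$ between $w^-(\cdot,\cdot,0)$ and $w^+(\cdot,\cdot,0)$ by enlarging $A$ and $b_0$. The paper merely asserts the squeezing step; your three-region verification fills in precisely the detail the paper leaves implicit.
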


\begin{proof}
We choose $w^+$ and $w^-$ as in Lemmata~\ref{l+:n} and~\ref{l-:n}, respectively. 
Note that 
\[
\begin{aligned}
	&w^+(r,\omega,0)= \big( \alpha^m(\omega) r^{-m\lambda} 
	+ A r^{-m\nu}+A\big)^\frac{1}{m}, \\
	&	w^-(r,\omega,0)= 
	\left\{ 
	\begin{aligned} 
	& \alpha(\omega) r^{-\lambda} 
	( 1-b_0 r^{m(\lambda-\nu)} )^\frac{1}{m} 
	&&\mbox{ for }r\leq \rho(0),  \\
	& \alpha(\omega) \delta^\frac{1}{m} 
	\rho^{\mu-\lambda}(0) r^{-\mu}
	&&\mbox{ for }r> \rho(0). 
	\end{aligned}
	\right.
\end{aligned}
\]
Moreover, $\rho(0)<1$. 
Then by choosing $A$ and $b_0$ sufficiently large 
and $\delta$ sufficiently small, 
we see that the function $u_0$ satisfying (A3) 
can be always squeezed in between comparison functions 
$w^-(r,\omega,0)$ and $w^+(r,\omega,0)$.
\end{proof}

\begin{proof} [Proof of Theorem~\ref{th:anisotropic}]
In Proposition~\ref{pro:n} we proved the existence of a global-in-time sub- and supersolution of problem~\eqref{eq:wprob}, which implies the existence of sub- and supersolution of~\eqref{eq:main} with the desired asymptotic behavior. 
The rest of the proof of Theorem~\ref{th:anisotropic} is the same as in Section~5 in~\cite{FTY}. Here, it was proved that the existence of global-in-time comparison functions, i.e. sub- and supersolution of~\eqref{eq:main}, which are positive and bounded on each compact subset of $(\R^{n}\setminus\{\xi_0\})\times(0,\infty)$, implies the existence of a global-in-time solution of~\eqref{eq:main}-\eqref{eq:main_initial}.
\end{proof}

\section{Proof of Theorem~\ref{th:dirac}}
\label{subsec:proof_th3}

\begin{proof} [Proof of Theorem~\ref{th:dirac}]
For simplicity, let $B_R:=B_R \left(\xi(t)\right)$ denote an open ball in $\R^n$ of radius $R$ centered at $\xi(t)$.
For $\varepsilon > 0$ let $\eta_\varepsilon \in C^2(\R)$ be a non-negative cut-off function such that $\eta_\varepsilon(r) \equiv 0$ for $r\leq \varepsilon$, 
$\eta_\varepsilon(r) \equiv 1$ for $r\geq 3\varepsilon$, 
$\eta_\varepsilon''\geq 0$ for $r \in [\varepsilon, 2\varepsilon]$, 
$\eta_\varepsilon''\leq 0$ for $r \in [2\varepsilon, 3\varepsilon]$, 
and $0 \leq \eta_\varepsilon'(r) \leq \eta_\varepsilon'(2\varepsilon) = \tilde c_1 \varepsilon^{-1}$ for some $\tilde c_1 >0$ and $|\eta_\varepsilon''| \leq \tilde c_2 \varepsilon^{-2}$ for some $\tilde c_2>0$.

Let $u$ be from Theorem~1.1 in~\cite{FTY}, that means that $u$ is a classical solution of~\eqref{eq:porous}-\eqref{eq:porous-init} such that $u \in C([0,\infty);L^1_{loc}(\R^n))$. Let $\varphi \in C_0^\infty (\R^n \times (0, \infty))$ and set $\varphi_\varepsilon(x,t) := \eta_\varepsilon(|x-\xi(t)|) \varphi(x,t)$.
Without loss of generality, we may assume that there is a nonempty open time interval $I \subset (0,\infty)$ such that $\xi(t) \in \supp \varphi(\cdot,t)$ for all $t \in I$. We can fix $\varepsilon$ sufficiently small so that $B_{3\varepsilon} \subset \supp \varphi(\cdot,t)$ for all $t \in I$.
Multiplying now equation~\eqref{eq:porous} by $\varphi_\varepsilon$ and integrating it over $\R^n \times (0, \infty)$, we obtain
\begin{equation} \label{eq:int_in_y}
    \int_0^\infty \int_{\R^n} u_t \varphi_\varepsilon \, dx \, dt = \int_0^\infty \int_{\R^n} \Delta u^m \varphi_\varepsilon \, dx \, dt.
\end{equation}
Let us denote
\begin{align*} 
\begin{split}
    I_\varepsilon &:=  \int_{B_{3\varepsilon}\setminus B_{\varepsilon}} u^m \eta_\varepsilon\Delta\varphi \, dx, \quad
    J_\varepsilon := 2\int_{B_{3\varepsilon}\setminus B_{\varepsilon}} u^m \nabla\eta_\varepsilon \cdot \nabla\varphi \, dx,  \\
    K_\varepsilon &:= \int_{B_{3\varepsilon}\setminus B_{\varepsilon}} u^m \varphi\Delta\eta_\varepsilon \, dx,\quad
    H_\varepsilon := \int_{B_{3\varepsilon}\setminus B_\varepsilon} u \eta_\varepsilon \varphi_t \, dx .
\end{split}
\end{align*}
Since $\varphi$ is smooth and compactly supported in $\R^n \times (0, \infty)$, integrating the right-hand side of~\eqref{eq:int_in_y} by parts we have
\begin{equation}\label{eq:est0}
    \int_0^\infty \int_{\R^n} \Delta u^m \varphi_\varepsilon \, dx \, dt
    = \int_0^\infty \int_{\R^n \setminus B_{3\varepsilon}} u^m \Delta\varphi \, dx\, dt + \int_0^\infty \left( I_\varepsilon + J_\varepsilon + K_\varepsilon \right)\, dt .
\end{equation}
Similarly, we analyze the left-hand side of~\eqref{eq:int_in_y} and obtain
\begin{equation*}
    \int_0^\infty \int_{\R^n} u_t \varphi_\varepsilon \, dx \, dt = - \int_0^\infty \int_{\R^n} u (\varphi_\varepsilon)_t \, dx \, dt = 
    - \int_0^\infty \int_{\R^n \setminus B_{3\varepsilon}} u \varphi_t \, dx \, dt - \int_0^\infty H_\varepsilon \, dt.
\end{equation*}
Hence, equation~\eqref{eq:int_in_y} can be written as
\begin{equation*}
    -\int_0^\infty \int_{\R^n \setminus B_{3\varepsilon}} \big( u \varphi_t + u^m \Delta\varphi \big) \, dx \, dt 
    = \int_0^\infty \left(H_\varepsilon + I_\varepsilon + J_\varepsilon + K_\varepsilon \right) \, dt.
\end{equation*}
In the following, we show that 
\begin{equation} \label{eq:assertion}
    H_\varepsilon + I_\varepsilon + J_\varepsilon + K_\varepsilon \to (n-2) |S^{n-1}| k^m(t) \varphi(\xi(t),t) 
\end{equation}
locally uniformly for $t$ as $\varepsilon \to 0$.
In order to do that, we choose $\varepsilon$ sufficiently small so that the method of sub- and supersolutions in~\cite{FTY} provides estimates of the form
\begin{equation} \label{eq:v_estimates}
\begin{split}
    u^m(x,t) &\leq k^m(t) \left(|x-\xi(t)|^{2-n} + b(t)|x-\xi(t)|^{-\lambda}\right),\\
    u^m(x,t) &\geq k^m(t) \left(|x-\xi(t)|^{2-n} - b(t)|x-\xi(t)|^{-\lambda}\right)_+,
\end{split}
\end{equation}
for all $(x,t) \in B_{3\varepsilon} \times I$. 
Here, $b(t)=b_0e^{B t}$ for some constants $B$, $b_0 >1$ and $\lambda < n-2$.
First, we deal with the integrals $H_\varepsilon, I_\varepsilon, J_\varepsilon$ and show that they converge to zero locally uniformly for $t$.
In what follows, by $c$ we will denote a large enough but otherwise arbitrary constant independent of $t$ and $\varepsilon$.
Given that $|\eta_\varepsilon| \leq 1$, for $m>m_c$ and $t<T$ for some $T>0$ it holds that
\[
\begin{aligned}
    |H_\varepsilon| &= \left|\int_{B_{3\varepsilon}\setminus B_\varepsilon} u \eta_\varepsilon \varphi_t \, dx \right| 
    \leq \sup_{B_{3\varepsilon}\setminus B_{\varepsilon}}|\varphi_t| \int_{B_{3\varepsilon}\setminus B_\varepsilon} u \, dx
    \leq c \int_{\varepsilon}^{3\varepsilon} \big(r^{2-n} + b(t) r^{-\lambda} \big)^{\frac{1}{m}} r^{n-1} \, dr \leq \\
    &\leq c \int_{\varepsilon}^{3\varepsilon} r^{\frac{2-n}{m} + n-1} \to 0 \quad \text{ as } \quad \varepsilon \to 0.
\end{aligned}
\]
Similarly, 
\[
\begin{aligned}
    |I_\varepsilon| &= \left| \int_{B_{3\varepsilon}\setminus B_{\varepsilon}} u^m \eta_\varepsilon\Delta\varphi \, dx \right| 
    \leq \sup_{B_{3\varepsilon}\setminus B_{\varepsilon}}|\Delta\varphi| \int_{B_{3\varepsilon}\setminus B_\varepsilon} u^m \, dx
    \leq c \int_{\varepsilon}^{3\varepsilon} \big(r^{2-n} + b(t) r^{-\lambda} \big) r^{n-1} \, dr \leq \\
    &\leq c \int_{\varepsilon}^{3\varepsilon} r \, dr \to 0 \quad \text{ as } \quad \varepsilon \to 0.
\end{aligned}
\]
Moreover, using $0 \leq\eta_\varepsilon' \leq \tilde c_1 \varepsilon^{-1}$, we obtain
\[
\begin{aligned}
    |J_\varepsilon| &= \left| 2 \int_{B_{3\varepsilon}\setminus B_{\varepsilon}} u^m \nabla\eta_\varepsilon \cdot \nabla\varphi \, dx \right| 
    \leq 2 \tilde c_1 \varepsilon^{-1} \sup_{B_{3\varepsilon}\setminus B_{\varepsilon}} |\omega \cdot\nabla\varphi| \int_{B_{3\varepsilon}\setminus B_\varepsilon} u^m \, dx\leq \\
    &\leq c \, \varepsilon^{-1} \int_{\varepsilon}^{3\varepsilon} \big(r^{2-n} + b(t) r^{-\lambda} \big) r^{n-1} \, dr 
    \leq c \varepsilon^{-1} \int_{\varepsilon}^{3\varepsilon} r \, dr \to 0 
    \quad \text{ as } \quad\varepsilon \to 0.
\end{aligned}
\]
Now we deal with the integral $K_\varepsilon$. Denoting
\[
   K_\varepsilon^1 := \int_{B_{3\varepsilon}\setminus B_{\varepsilon}} u^m \varphi |x-\xi(t)|^{-1}\eta_\varepsilon' \, dx, \quad
   K_\varepsilon^2 := \int_{B_{2\varepsilon}\setminus B_{\varepsilon}} u^m \varphi \eta_\varepsilon'' \, dx, \quad
   K_\varepsilon^3 := \int_{B_{3\varepsilon}\setminus B_{2\varepsilon}} u^m \varphi (-\eta_\varepsilon)'' \, dx,
\]
we can split $K_\varepsilon$ into
\begin{equation*}
    K_\varepsilon = (n-1) K_\varepsilon^1 + K_\varepsilon^2 - K_\varepsilon^3.
\end{equation*}
By means of the non-negativity of $u^m$ and the properties $\eta_\varepsilon' \geq 0$, $\eta_\varepsilon'' \geq 0$ on $B_{2\varepsilon}\setminus B_{\varepsilon}$, and $\eta_\varepsilon'' \leq 0$ on $B_{3\varepsilon}\setminus B_{2\varepsilon}$, 
we obtain
\[
\begin{aligned}
   &\inf_{B_{3\varepsilon}\setminus B_{\varepsilon}} \varphi \int_{B_{3\varepsilon}\setminus B_{\varepsilon}} u^m |x-\xi(t)|^{-1}\eta_\varepsilon' \, dx
   \leq K_\varepsilon^1 \leq 
   \sup_{B_{3\varepsilon}\setminus B_{\varepsilon}} \varphi \int_{B_{3\varepsilon}\setminus B_{\varepsilon}} u^m |x-\xi(t)|^{-1}\eta_\varepsilon' \, dx, \\
   &\inf_{B_{2\varepsilon}\setminus B_{\varepsilon}} \varphi \int_{B_{2\varepsilon}\setminus B_{\varepsilon}} u^m \eta_\varepsilon'' \, dx
   \leq K_\varepsilon^2 \leq 
   \sup_{B_{2\varepsilon}\setminus B_{\varepsilon}} \varphi \int_{B_{2\varepsilon}\setminus B_{\varepsilon}} u^m \eta_\varepsilon'' \, dx, \\
	&\inf_{B_{3\varepsilon}\setminus B_{2\varepsilon}} \varphi \int_{B_{3\varepsilon}\setminus B_{2\varepsilon}} u^m (-\eta_\varepsilon)'' \, dx
   \leq K_\varepsilon^3 \leq 
   \sup_{B_{3\varepsilon}\setminus B_{2\varepsilon}} \varphi \int_{B_{3\varepsilon}\setminus B_{2\varepsilon}} u^m (-\eta_\varepsilon)'' \, dx.
\end{aligned}
\]
We only consider the case where 
$\sup_{B_{3\varepsilon}\setminus B_{\varepsilon}} \varphi$ and $\inf_{B_{3\varepsilon}\setminus B_{\varepsilon}} \varphi$ are non-negative. 
The other cases can be handled in the same way by using ~\eqref{eq:est_L} below.
Indeed, a change of the sign of $\sup_{B_{3\varepsilon}\setminus B_{\varepsilon}} \varphi$ and $\inf_{B_{3\varepsilon}\setminus B_{\varepsilon}} \varphi$ 
will not change the limit value. 

For $a:= n-2-\lambda>0$ we set
\begin{equation*}
    L_\varepsilon^1 := b(t) \int_\varepsilon^{2\varepsilon} r^{a+1} \eta_\varepsilon'' \, dr, \quad
    L_\varepsilon^2 := b(t) \int_{2\varepsilon}^{3\varepsilon} r^{a+1} (-\eta_\varepsilon)'' \, dr, \quad
    L_\varepsilon^3 := b(t) \int_\varepsilon^{3\varepsilon} r^a \eta_\varepsilon' \, dr.
\end{equation*}
Since $0 \leq \eta_\varepsilon' \leq \tilde c_1 \varepsilon^{-1}$, for $t<T$ with some $T>0$ it holds that
\begin{equation*} 
     | L_\varepsilon^3 | = \left| b(t)\int_\varepsilon^{3\varepsilon} r^a \eta_\varepsilon' \, dr \right| 
    \leq c \varepsilon^a,
\end{equation*}
and by $|\eta_\varepsilon''| \leq \tilde c_2 \varepsilon^{-2}$ we have
\begin{equation*} 
    | L_\varepsilon^1 | + | L_\varepsilon^2 | = \left| b(t) \int_\varepsilon^{2\varepsilon} r^{a+1} \eta_\varepsilon'' \, dr \right|
    +
    \left| b(t)\int_{2\varepsilon}^{3\varepsilon} r^{a+1} (-\eta_\varepsilon)'' \, dr \right| 
    \leq c \varepsilon^a.
\end{equation*}
Hence, 
\begin{equation} \label{eq:est_L}
    L_\varepsilon^1 \to 0, \quad  L_\varepsilon^2 \to 0, \quad L_\varepsilon^3 \to 0 \quad \text{ as } \quad \varepsilon \to 0
\end{equation}
locally uniformly for $t$.
Using inequalities~\eqref{eq:v_estimates}, we have 
\begin{equation} \label{eq:k1_up}
    K_\varepsilon^1
    \leq |S^{n-1}| k^m(t) \sup_{B_{3\varepsilon}\setminus B_{\varepsilon}} \varphi  \int_\varepsilon^{3\varepsilon} \big(1 + b(t)r^{a}\big)\eta_\varepsilon' \, dr
    = |S^{n-1}| k^m(t) \sup_{B_{3\varepsilon}\setminus B_{\varepsilon}} \varphi \left( 1 + L_\varepsilon^3 \right),
\end{equation}
and
\begin{equation} \label{eq:k1_down}
    K_\varepsilon^1
    \geq |S^{n-1}| k^m(t) \inf_{B_{3\varepsilon}\setminus B_{\varepsilon}} \varphi \int_\varepsilon^{3\varepsilon} \big(1 - b(t)r^{a}\big) \eta_\varepsilon' \, dr 
    = |S^{n-1}| k^m(t) \inf_{B_{3\varepsilon}\setminus B_{\varepsilon}} \varphi \left( 1 - L_\varepsilon^3 \right).
\end{equation}
Integrating by parts and estimating integrals $K_\varepsilon^2$ and $K_\varepsilon^3$ we have
\begin{equation*}
\begin{split}
    K_\varepsilon^2
    &\leq |S^{n-1}| k^m(t) \sup_{B_{2\varepsilon}\setminus B_{\varepsilon}} \varphi 
    \int_\varepsilon^{2\varepsilon} \big(r+b(t)r^{a+1}\big) \eta_\varepsilon'' \, dr =\\
    &= |S^{n-1}| k^m(t) \sup_{B_{2\varepsilon}\setminus B_{\varepsilon}} \varphi 
    \left( 2\varepsilon \eta_\varepsilon'(2\varepsilon) - \eta_\varepsilon(2\varepsilon) + L_\varepsilon^1 \right),
\end{split}
\end{equation*}
and
\begin{equation*}
\begin{split}
    K_\varepsilon^3 
    &\geq |S^{n-1}| k^m(t) \inf_{B_{3\varepsilon}\setminus B_{2\varepsilon}} \varphi 
    \int_{2\varepsilon}^{3\varepsilon} \big(r-b(t)r^{a+1}\big) (-\eta_\varepsilon)'' \, dr =\\
    &= |S^{n-1}| k^m(t) \inf_{B_{3\varepsilon}\setminus B_{2\varepsilon}} \varphi 
    \left( 2\varepsilon \eta_\varepsilon'(2\varepsilon) + 1 - \eta_\varepsilon(2\varepsilon) - L_\varepsilon^2 \right).
\end{split}
\end{equation*}
Thus, 
\begin{equation} \label{eq:k23_up}
\begin{split}
    K_\varepsilon^2 - K_\varepsilon^3 
    \leq  \, - |S^{n-1}| k^m(t) 
    \Big[ &(1- L_\varepsilon^2) \inf_{B_{3\varepsilon}\setminus B_{2\varepsilon}}\varphi 
    - L_\varepsilon^1 \sup_{B_{2\varepsilon}\setminus B_{\varepsilon}} \varphi \, - \\
    &  - \Big( \sup_{B_{2\varepsilon}\setminus B_{\varepsilon}}\varphi - \inf_{B_{3\varepsilon}\setminus B_{2\varepsilon}}\varphi \Big) \big( 2\varepsilon\eta_\varepsilon'(2\varepsilon) - \eta_\varepsilon(2\varepsilon) \big) \Big].
\end{split}
\end{equation}
Analogously, 
\begin{equation*}
\begin{split}
    K_\varepsilon^2
    &\geq |S^{n-1}| k^m(t) \inf_{B_{2\varepsilon}\setminus B_{\varepsilon}} \varphi 
    \int_\varepsilon^{2\varepsilon} \big(r-b(t)r^{a+1}\big) \eta_\varepsilon'' \, dr = \\
    &= |S^{n-1}| k^m(t) \inf_{B_{2\varepsilon}\setminus B_{\varepsilon}} \varphi 
    \left( 2\varepsilon \eta_\varepsilon'(2\varepsilon) - \eta_\varepsilon(2\varepsilon) - L_\varepsilon^1 \right),
\end{split}
\end{equation*}
and
\begin{equation*}
\begin{split}
    K_\varepsilon^3 & 
    \leq |S^{n-1}| k^m(t) \sup_{B_{3\varepsilon}\setminus B_{2\varepsilon}} \varphi 
    \int_{2\varepsilon}^{3\varepsilon} \big(r+b(t)r^{a+1}\big) (-\eta_\varepsilon)'' \, dr = \\
    &= |S^{n-1}| k^m(t) \sup_{B_{3\varepsilon}\setminus B_{2\varepsilon}} \varphi 
    \left( 2\varepsilon \eta_\varepsilon'(2\varepsilon) + 1 - \eta_\varepsilon(2\varepsilon) + L_\varepsilon^2 \right).
\end{split}
\end{equation*}
Hence,
\begin{equation} \label{eq:k23_down}
\begin{split}
    K_\varepsilon^2 - K_\varepsilon^3 
    \geq  \, - |S^{n-1}| k^m(t) \Big[ &(1 + L_\varepsilon^2) \sup_{B_{3\varepsilon}\setminus B_{2\varepsilon}}\varphi +  L_\varepsilon^1 \inf_{B_{2\varepsilon}\setminus B_{\varepsilon}} \varphi \, - \\
    &- \Big( \inf_{B_{2\varepsilon}\setminus B_{\varepsilon}}\varphi - \sup_{B_{3\varepsilon}\setminus B_{2\varepsilon}}\varphi \Big) \big( 2\varepsilon\eta_\varepsilon'(2\varepsilon) - \eta_\varepsilon(2\varepsilon) \big) \Big] .
\end{split}
\end{equation}
Finally, using~\eqref{eq:est_L}, inequalities~\eqref{eq:k1_up},~\eqref{eq:k1_down},~\eqref{eq:k23_up}, and~\eqref{eq:k23_down} yield
\begin{equation*}
    K_\varepsilon \to (n-2) |S^{n-1}| k^m(t) \varphi(\xi(t),t)  \quad \text{ as } \quad\varepsilon \to 0
\end{equation*}
locally uniformly for $t$.
Thus, the assertion~\eqref{eq:assertion} is proved, which completes the proof.
\end{proof}

\section{Proof of Theorem~\ref{th:anisotropic_weak}} 
\label{subsec:proof_anisotropic_weak}

\begin{proof} [Proof of Theorem~\ref{th:anisotropic_weak}~\ref{it:i}]
The functions from Theorem~\ref{th:anisotropic} satisfy $u \in C([0,\infty);L^1_{loc}(\R^n))$. The proof of this statement is analogous to the proof of Lemma~5.2 in~\cite{FTY}, and so we omit it here.

Since in this case $\lambda < (n-2)/m$ by $\lambda<n$, it necessarily holds that the singularity of solutions from Theorem~\ref{th:anisotropic} is weaker than the singularity of solutions of the type~\eqref{eq:steady_state} and~\eqref{eq:rad_as_sym}. It suggests that in the distributional sense, unlike solutions of the type~\eqref{eq:steady_state} and~\eqref{eq:rad_as_sym} satisfy equations~\eqref{eq:fund_weak} and~\eqref{eq:n3_weak_delta} with a singular source term, solutions from Theorem~\ref{th:anisotropic_weak}~\ref{it:i} satisfy equation~\eqref{eq:weak} with no source term on the right-hand side.
Rigorously, the proof of this theorem can be carried out analogously to the proof of Theorem~\ref{th:dirac} in Section~\ref{subsec:proof_th3} (and it is less technical due to standing versus moving singularity).
\end{proof}

\begin{proof} [Proof of Theorem~\ref{th:anisotropic_weak}~\ref{it:ii}]
To prove that $u \notin L^p_{loc}(\R^n \times [0,\infty))$ for any $p \geq 1$ if $\lambda \geq n$, we integrate over $B_1(\xi_0) \times [0,1]$, and use the comparison function $w^-$ to estimate the integral
\begin{equation*}
    I:=\int_0^1 \int_{B_1(\xi_0)} u^p(x,t) \, dx \, dt \geq \int_0^1 \int_{B_1(\xi_0)} (w^-(r,\omega,t))^p \, dx \, dt.
\end{equation*}
By the definition of $w^- \geq 0$ in~\eqref{eq:subsolform}, $\rho(t)<1$, and $\alpha \geq \alpha_{min} >0$, we have
\begin{equation*}
    I 
    \geq \int_0^1 \int_{B_{\rho(t)}(\xi_0)} (w^-(r,\omega,t))^p \, dx \, dt
    \geq \alpha_{min}^p \int_0^1 \int_0^{\rho(t)} r^{n-1-p\lambda} \big( 1-b(t) r^{m(\lambda-\nu)} \big)^\frac{p}{m} \, dr \, dt.
\end{equation*}
Substituting $z=b(t) r^{m(\lambda-\nu)}$, by $b \geq 1$ we obtain
\[
\begin{aligned}
    I 
    &\geq \frac{\alpha_{min}^p}{m(\lambda-\nu)} \int_0^1 b^{\frac{p\lambda-n}{m(\lambda-\nu)}}(t) \int_0^{1-\delta} z^{-1-\frac{p\lambda-n}{m(\lambda-\nu)}} (1-z)^\frac{p}{m} \, dz \, dt \\
    &\geq \frac{\alpha_{min}^p \delta^\frac{p}{m}}{m(\lambda-\nu)} \int_0^1 \int_0^{1-\delta} z^{-1-\frac{p\lambda-n}{m(\lambda-\nu)}} \, dz \, dt.
\end{aligned}
\]
This integral is infinite exactly when $\lambda \geq n$ for any $p \geq 1$, which completes the proof.
\end{proof}

\section{Non-integrability of the singular traveling wave} 
\label{subsec:proof_snaking}

In this section we show that for $U$ from~\eqref{eq:snaking} it holds that $U \notin L^p_{loc}(\R^n \times \R)$ for any $p \geq 1$. Without loss of generality, we may take $a = e_n = (0,\ldots,0,1)$. Indeed, given any velocity vector $a \in \R^n$, we could transform the coordinate system and proceed as below. Let $B'_1:= \{ x' \in \R^{n-1}; |x'|<1\}$.
For $p \geq 1$ we examine the integrability over $[0,1] \times B'_1 \times [-1,0]$, i.e.
\begin{equation*}
    I:= \int_0^1 \int_{B'_1 \times [-1,0]} U^p(x,t) \, dx \, dt 
    = \int_0^1 \int_{-1}^0 \int_{B'_1} C^p \big( \sqrt{|x'|^2+(x_n-t)^2} + (x_n-t) \big)^{-\frac{p}{1-m}} \, dx' \, dx_n \, dt.
\end{equation*}
By the change of variables $y_n=-(x_n-t)$ and $|x'| = y_n r$, we obtain
\begin{equation*}
\begin{split}
    I
    &= C^p |S^{n-2}| \int_0^1 \int_{1+t}^t \int_0^{1/y_n} (y_n r)^{n-2} \big( \sqrt{y_n^2 r^2 + y_n^2} - y_n \big)^{-\frac{p}{1-m}} y_n \, dr \, (-dy_n) \, dt, \\
    &= C^p |S^{n-2}| \int_0^1 \int_t^{1+t} y_n^{n-1-\frac{p}{1-m}} \int_0^{1/y_n} r^{n-2} \big( \sqrt{r^2 + 1} - 1 \big)^{-\frac{p}{1-m}} \, dr \, dy_n \, dt,\\
    &= C^p |S^{n-2}| \int_0^1 \int_t^{1+t} y_n^{n-1-\frac{p}{1-m}} \int_0^{1/y_n} r^{n-2-\frac{2p}{1-m}} \big( \sqrt{r^2 + 1} + 1 \big)^{\frac{p}{1-m}} \, dr \, dy_n \, dt.
\end{split}
\end{equation*}
We have $1/y_n \geq 1/2$, hence
\begin{equation*}
    I \geq C^p \int_0^1 \int_1^{1+t} y_n^{n-1-\frac{p}{1-m}} \int_0^{1/2} r^{n-2-\frac{2p}{1-m}} \, dr \, dy_n \, dt,
\end{equation*}
which is finite if $p < (1-m)(n-1)/2$. 
Since we assumed that $p \geq 1$, $m>(n-3)/(n-1)=m^*$ for $n\geq3$ 
and $m>0=m^*$ for $n=2$, this condition for $p$ cannot be satisfied. This implies the conclusion.

\subsection*{Acknowledgment}
We thank the referee for the comments that helped improve the presentation significantly.


\begin{thebibliography}{10}

\bibitem{Bookholt}
F. D. Bookholt, H. N. Monsuur, S. Gibbs, and F. J. Vermolen,
\textit{Mathematical modelling of angiogenesis using continuous cell-based models},
Biomech. Model. Mechanobiol. \textbf{15} (2016), 1577--1600.

\bibitem{ChV}
E. Chasseigne and J. L.  V\'azquez, 
\textit{Theory of extended solutions for fast-diffusion equations in optimal classes of data. Radiation from singularities},
Arch. Ration. Mech. Anal.  {\bf 164} (2002), 133--187. 

\bibitem{CZ}
X. Chen and W. Zhu,
\textit{A mathematical model of regenerative axon growing along glial scar after spinal cord injury},
Computational and Mathematical Methods in Medicine {\bf 2016} (2016), 3030454.

\bibitem{FKTY} 
M. Fila, J. R. King, J. Takahashi, and E. Yanagida, 
\textit{Solutions with snaking singularities for the fast diffusion equation},
Trans. Amer. Math. Soc. \textbf{374} (2021), 8775--8792. 

\bibitem{FMTY} 
M. Fila, P. Mackov\'a, J. Takahashi, and E. Yanagida, 
\textit{Moving singularities for nonlinear diffusion equations in two space dimensions},
J. Elliptic Parabol. Equ. \textbf{6} (2020), 155--169. 

\bibitem{FTY} 
M. Fila, J. Takahashi, and E. Yanagida, 
\textit{Solutions with moving singularities for equations of porous medium type},  Nonlinear Anal. \textbf{179} (2019), 237--253. 

\bibitem{HP}
M. A. Herrero and M. Pierre, 
\textit{The Cauchy problem for $u_t=\Delta u^m$ when $0<m<1$},
Trans. Amer. Math. Soc. {\bf 291} (1985), 145--158. 

\bibitem{KT14}
T. Kan and J. Takahashi, 
\textit{On the profile of solutions with time-dependent singularities for the heat equation},
Kodai Math. J. \textbf{37} (2014), 568--585. 

\bibitem{KT16}
T. Kan and J. Takahashi, 
\textit{Time-dependent singularities 
in semilinear parabolic equations: behavior at the singularities},
J. Differential Equations \textbf{260} (2016), 7278--7319. 

\bibitem{Lu1}
T. Lukkari, 
\textit{The porous medium equation with measure data}, 
J. Evol. Equations {\bf 10} (2010), 711--729. 

\bibitem{Lu2}
T. Lukkari, 
\textit{The fast diffusion equation with measure data}, 
NoDEA Nonlinear Differential Equations Appl. {\bf 19} (2012), 329--343. 

\bibitem{PO}
Y. Povstenko and M. Ostoja-Starzewski,
\textit{Doppler effect described by the solutions of the Cattaneo telegraph equation},
Acta Mech. \textbf{232} (2021), 725--740.

\bibitem{LaplaceBeltrami}
N. Shimakura, 
``Partial Differential Operators of Elliptic Type,''
American Mathematical Society, Providence, RI, 1992.

\bibitem{TY2021}
J. Takahashi and H. Yamamoto, 
\textit{Infinite-time incompleteness of noncompact Yamabe flow},
	arXiv preprint arXiv:2111.08068 [math.DG], 2021.

\bibitem{vazquez2} 
J. L. V{\'a}zquez, 
``Smoothing and Decay Estimates for Nonlinear Diffusion Equations. 
Equations of Porous Medium Type,'' 
Oxford University Press, Oxford, 2006.

\bibitem{VW}
J. L. V\'azquez and M. Winkler, 
\textit{The evolution of singularities in fast diffusion equations: Infinite-time blow-down}, 
SIAM J. Math. Anal. {\bf 43} (2011), 1499--1535. 

\end{thebibliography}
\end{document}